\newtheorem{theorem}{Theorem}[section]
\newtheorem{lemma}[theorem]{Lemma}
\newtheorem{definition}[theorem]{Definition}
\newtheorem{corollary}[theorem]{Corollary}
\theoremstyle{definition}
\newtheorem{remark}[theorem]{Remark} 
\newcommand{\supp}{\operatorname{supp}}
\newcommand{\eps}{\varepsilon}
\newcommand{\bbR}{\mathbb R} \newcommand{\bbS}{\mathbb S}
\newcommand{\cC}{\mathcal C}
 \newcommand{\cL}{\mathcal L}
\newcommand{\cM}{\mathcal M} 
\newcommand{\cO}{\mathcal O}
\newcommand{\aver}[1]{\langle {#1} \rangle}
\title{Inverse Boundary Problem for the Two Photon Absorption Transport Equation}
\author[Plamen Stefanov]{Plamen Stefanov}
\email{stefanop@purdue.edu}  
\address{Department of Mathematics, Purdue University, West Lafayette, IN 47907}
\thanks{P.S.\ partially supported by the National Science Foundation under grant DMS-1900475.}
\author[Yimin Zhong]{Yimin Zhong}
\email{yimin.zhong@duke.edu}  
\address{Department of Mathematics, Duke University, Durham, NC 27710}
\date{}
\begin{document}

\begin{abstract}
We study the inverse boundary problem for the nonlinear two photon absorption radiative transport equation. We show that the absorption coefficients and  the scattering coefficient can be uniquely determined from the \emph{albedo} operator. If  the scattering is absent, we do not require smallness of the incoming source and the reconstruction of the absorption coefficients is  explicit.
\end{abstract}
\maketitle

\section{Introduction}
In this work we study the inverse boundary problem for the two photon absorption radiative transport equation. Two photon absorption happens when it takes two photons to excite a molecule from one state to another~\cite{rumi2010two, van1985two}. The probability of a two photon absorption at a given point is proportional to the light intensity there regardless of the incoming direction, which makes the corresponding term quadratic. One of the applications of two photon absorption is in medical imaging: the human body is not transparent to optical rays but it is more transparent to infrared ones. Then  fluorescent dyes with good two photon absorption rates can be used successfully with such a large wavelength excitation, see, e.g., \cite{Makarov:08, pawlicki2009two}. Other applications are pointed out in \cite{pawlicki2009two}; for example: microscopy, microfabrication, three-dimensional data storage, etc. For applications to photoacoustic imaging, we refer to \cite{bardsley2018quantitative} and the references there. 

Let $\Omega\subset\mathbb{R}^n$, $n\ge 2$ be an open bounded convex set with a $C^1$ boundary $\partial\Omega$, and let $\mathbb{S}^{n-1}$ be the unit sphere in $\mathbb{R}^n$, $\Gamma_{\pm} = \{(x, \theta)\in \partial\Omega\times\mathbb{S}^{n-1}\mid \pm\,n(x)\cdot \theta > 0 \}$, where $n(x)$ is the outer normal at $x\in\partial\Omega$. Denote by $u(x, \theta)$ the photon density function at spatial location $x\in\Omega$ in the direction $\theta\in\mathbb{S}^{n-1}$. Then our model is the following equation, see also \cite{Ren_Zhong2020},
\begin{equation}\label{EQ: TP RTE}
  \begin{aligned}
    \theta\cdot \nabla_x u(x, \theta)  + (\sigma_a(x, \theta) + \sigma_b(x, \theta) |\aver{u}|) u(x, \theta)  - Ku(x, \theta)&=0  \, &&\text{in } \Omega \times \mathbb{S}^{n-1}, \\
    u(x, \theta) &= f_{-}(x, \theta)\,& &\text{on } \Gamma_{-},
  \end{aligned}
\end{equation}
where $\aver{u}$ is the average of $u(x, \theta)$ over the angular variable $\theta$; that is, 
\begin{equation}\label{EQ: AVER}
  \aver{u} := \int_{\mathbb{S}^{n-1}} u(x, \theta) d\theta,
\end{equation}
with $d\theta$ being the normalized surface measure on $\mathbb{S}^{n-1}$.  When $u\ge0$, the absolute value in  $|\aver{u}|$ does not matter, of course but for general solutions, we include it to have a well-posed problem. 
The linear operator $ {K}$ is defined by
\begin{equation}\label{EQ: SCATTER}
  Ku(x, \theta) := \int_{\mathbb{S}^{n-1}} k(x, \theta', \theta) u(x, \theta') d\theta'.
\end{equation}
The coefficients $\sigma_a(x, \theta), k(x, \theta', \theta)$ are the usual total absorption and scattering coefficients, respectively. The coefficient $\sigma_b$ stands for strength of the nonlinear effect of two photon absorption, the term $\sigma_a + \sigma_b\aver{u}$ can be understood as the effective total absorption coefficient dependent on the solution. They are all assumed to be non-negative, and we impose smallness assumptions on $k$, $\sigma_b$ and $f_-$, see Definition~\ref{def_adm}.  

If the direct problem~\eqref{EQ: TP RTE} is uniquely solvable, one can define the usual \emph{albedo} operator 
\begin{equation}
  \mathcal{A}: f_{-}\mapsto f_{+},
\end{equation}
where $f_{+}(x, \theta) := u(x, \theta)|_{\Gamma_{+}}$ denotes the exiting photon density. This {albedo} operator is non-linear, and we are interested in finding out whether the albedo operator $\mathcal{A}$ determines uniquely the coefficients $\sigma_a(x, \theta), \sigma_b(x, \theta), k(x, \theta', \theta)$.

When $\sigma_b=0$, the equation~\eqref{EQ: TP RTE} is linear. Uniqueness and  recovery formulas for $\sigma_a$ and $k$, when $\sigma_a$ depends on $x$ only, were established in~\cite{Ch-St-Osaka} for $n\ge3$ and in \cite{SU-optical2D} for $n=2$ under a smallness assumption on $k$. The general case of $\sigma=\sigma(x,\theta)$ for $n\ge3$ was resolved in~\cite{ST-PAMC}. Stability estimates were proved in~\cite{Bal_J_stability, Bal_J_stab2}.  
Inverse radiative transport in the Riemannian setting was studied in \cite{MST_2011,SMT-gauge,SMT-stab,Steve2D,Steve2004,Sh-transport}, and for a different dynamical system, see~\cite{lai2019parameter}, there are also many other works regarding different types of boundary measurement, see~\cite{Bal-Monard-2012, Bal-Jol-2011,Bal-2009,Bal-K-M-2008,SU-APDE, zhao2019instability, lai2019inverse} and the references therein. References to earlier works can be found in the survey ~\cite{S-inside-out}. 

Inverse problems for non-linear versions of the transport equation (different from the one we study here) are studied in \cite{lai2021reconstruction, Klingenberg-Lai-Li-2021}. In~\cite{Ren_Zhong2020}, the authors considered the inverse medium problem under the same nonlinear model as~\eqref{EQ: TP RTE} and showed the uniqueness and stability of the reconstruction of absorption coefficients from internal data. 

The main result is the following. We show that we can recover $\sigma_a$, $k$, and $\sigma_b$ given the nonlinear operator $\mathcal{A}$. The idea of the proof is the following. 
If we take $f_-$  small, then we are in the linear regime and can use the result in \cite{Ch-St-Osaka} to recover $\sigma_a$ if it depends on $x$ only, and $k$. The latter requires $n\ge3$, see also \cite{SU-optical2D} for the 2D case. Next, we can take $f_-= f_0+ \delta f_1$, see \eqref{EQ:f}, with $0<\delta\ll1$ and $f_0>0$ smooth but $f_1$ singular in the $\theta$ variable only.  Then $f_0$ would not create singularities in solution at order $\mathcal{O}(\delta)$ but the effective absorption coefficient would involve $\sigma_b \aver{u_0}$, where $u_0$ is the leading $\mathcal{O}(1)$ term of the solution which is determined by $f_0$, see \eqref{EQ:21} and \eqref{EQ: U1}. This is the reason we require $f_0>0$, so that $\aver{u_0}>0$ and we can divide by it eventually to recover $\sigma_b$. Then choosing $f_1$ concentrated near a single $\theta'$ (and independent of $x$), allows us to reconstruct the X-ray transform of $\sigma_b$, and therefore $\sigma_b$ itself; see Theorem~\ref{THM: B}. 

Particularly, when $k=0$, one can solve the equation~\eqref{EQ: TP RTE} directly with $f_-$ in the form of $f_- = v_-(x)\delta_{\theta_0}(\theta)$ (a collimated source), see \eqref{2}, where $v_->0$ smooth. Then we are solving a Riccati ODE along each line $s\mapsto (x_0+ s\theta_0, \theta_0)$.   This allows us to recover $\sigma_a$ if it depends on $x$ only, and $\sigma_b$ through their attenuated X-ray transforms without the smallness assumption on $f_-$ (or of the perturbation of $f_-$ as in \eqref{EQ:f}), see Theorem~\ref{thm_sc-free}. This way, we may work with signals which are not necessarily small and will be less sensitive to additive background noise. 

The rest of the paper is organized as follows. In Section~\ref{SEC: PRE}, we state the preliminary results about the well posedness of the two photon absorption radiative transport model~\eqref{EQ: TP RTE}. Section~\ref{SEC: MAIN} consists of the main theorems about the reconstructions of the absorption and scattering coefficients, respectively. The scattering free case  $k = 0$ is discussed in Section~\ref{SEC: SCATTER FREE}. 

\section{Preliminaries}\label{SEC: PRE}

We first study the well posedness of \eqref{EQ: TP RTE} and of the {albedo} operator $\mathcal{A}$.  Define $\tau_{\pm}(x, \theta) :=\min\{ t\ge 0\mid x \pm t\theta \in\partial \Omega   \}$, which stands for the distance between $x$ and the boundary $\partial\Omega$ along $\pm\theta$. Set $\tau(x, \theta) = \tau_{-}(x, \theta) + \tau_{+}(\theta)$, and define the boundary measure $d\xi = |n(x)\cdot \theta| d\mu(x) d\theta$, where $d\mu(x)$ is the Lebesgue measure on $\partial \Omega$. Define the function space 
\[
\mathcal{H}^1(\Omega\times\mathbb{S}^{n-1}) := \left\{f\mid f\in L^1(\Omega\times\mathbb{S}^{n-1})\text{ and } \theta\cdot \nabla_x f \in L^1(\Omega\times\mathbb{S}^{n-1})  \right\}.
\]
 We further denote the function subspaces $L^1_{S}(\Gamma_{-}, d{\xi})\subset L^1(\Gamma_{-}, d{\xi})$ by 
\begin{equation}
  \begin{aligned}
    L^1_{S}(\Gamma_{-}, d{\xi}) &:= \Big\{ f \mid f\in L^1(\Gamma_{-}, d{\xi}) \text{ and } \|f\|_{\ast}<\infty\Big\},
  \end{aligned}
\end{equation} 
where the $\|\cdot\|_{\ast}$ norm is defined by 
\begin{equation}
  \|f\|_{\ast} := \Big\|\int_{\mathbb{S}^{n-1}} |f(x - \tau_{-}(x,\theta)\theta, \theta)| d\theta \Big\|_{L^{\infty}(\Omega)} .
\end{equation}

\begin{definition} \label{def_adm}
We call the tuple of functions $(\sigma_a, \sigma_b, k, f_{-})$  \emph{admissible} if 
\begin{enumerate}
  \item $\sigma_a, \sigma_b \in L^{\infty}(\Omega\times\mathbb{S}^{n-1})$, $\sigma_a \ge 0$ and $\sigma_b\ge 0$,
  \item $0\le k(x, \theta',\theta) \in L^{\infty}(\Omega\times \mathbb{S}^{n-1}\times \mathbb{S}^{n-1})$ and there exists a constant $\mu \in [0,1)$ such that 
$$
\|\tau\|_{L^{\infty}(\Omega\times\bbS^{n-1})} \| k\|_{L^{\infty}(\Omega\times 
  \bbS^{n-1} \times \bbS^{n-1})}\le\mu,
$$
  \item $f_{-}\in L_S^1(\Gamma_{-},d{\xi})$ and there exists $\nu\in [0, 1)$ such that
  \begin{equation}\nonumber
    \|\tau\|_{L^{\infty}(\Omega\times\bbS^{n-1})}\|\sigma_b\|_{L^{\infty}(\Omega\times\bbS^{n-1})}  \|f_{-}\|_{\ast} \le \nu (1-\mu)^2.
  \end{equation}
\end{enumerate}
\end{definition}

\begin{definition}
  We define the following operators: 
\[
Tu := -\theta\cdot \nabla_x u,\quad Su := \aver{u}, \quad \Sigma(m) u := -(\sigma_a + \sigma_b m) u.
\]
Then $\Sigma(m) = -(\sigma_a + \sigma_b m)$. Let $J(m) : L^1_S(\Gamma_{-}, d\xi) \mapsto {L^1(\Omega\times\bbS^{n-1})}$ be defined by 
  \begin{equation}
    J(m) f_{-}(x, \theta) =f_{-}(x - \tau_{-}(x, \theta)\theta, \theta) \exp\left( \int_0^{\tau_{-}(x, \theta)}\Sigma(m)(x - l\theta, \theta) dl  \right) ,
  \end{equation}
  and let $H(m): L^1(\Omega\times\bbS^{n-1})\to L^1(\Omega\times\bbS^{n-1})$ be defined by 
  \begin{equation}
      H(m) u(x, \theta) = \int_{0}^{\tau_{-}(x, \theta)} \exp\left(\int_0^l \Sigma(m) (x - s\theta,\theta) ds \right) Ku(x-l\theta, \theta)dl.
  \end{equation}
\end{definition}
\begin{lemma}\label{LEM: H}
    If the coefficients are admissible, then for any $m\in L^{\infty}(\Omega)$, 
\[
|H(|m|)u(x,\theta)| \le \mu \|\aver{|u|}\|_{L^{\infty}(\Omega)}.
\]
\end{lemma}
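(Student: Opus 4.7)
The plan is to bound the integrand of $H(|m|)u$ pointwise, using (i) the non-positivity of $\Sigma(|m|)$ to discard the exponential weight and (ii) the boundedness of $k$ to control the scattering operator $K$ by $\langle|u|\rangle$. Then the admissibility bound on $\|\tau\|_\infty\|k\|_\infty$ supplies the factor $\mu$.

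More concretely, first I would observe that since $\sigma_a,\sigma_b\ge 0$ and $|m|\ge 0$, the function $\Sigma(|m|) = -(\sigma_a+\sigma_b|m|)$ is pointwise non-positive. Therefore for every $l\in[0,\tau_-(x,\theta)]$,
\begin{equation*}
\exp\!\left(\int_0^l \Sigma(|m|)(x-s\theta,\theta)\,ds\right) \le 1.
\end{equation*}
Next, for any $y\in\Omega$ and $\theta\in\bbS^{n-1}$, the definition of $K$ gives
\begin{equation*}
|Ku(y,\theta)| \le \int_{\bbS^{n-1}} k(y,\theta',\theta)\,|u(y,\theta')|\,d\theta' \le \|k\|_{L^\infty}\,\langle|u|\rangle(y).
\end{equation*}

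Combining these two estimates and taking absolute values inside the integral defining $H(|m|)u$ yields
\begin{equation*}
|H(|m|)u(x,\theta)| \le \|k\|_{L^\infty}\int_0^{\tau_-(x,\theta)} \langle|u|\rangle(x-l\theta)\,dl \le \|k\|_{L^\infty}\,\tau_-(x,\theta)\,\|\langle|u|\rangle\|_{L^\infty(\Omega)}.
\end{equation*}
Since $\tau_-(x,\theta)\le\tau(x,\theta)\le\|\tau\|_{L^\infty(\Omega\times\bbS^{n-1})}$, the admissibility hypothesis $\|\tau\|_{L^\infty}\|k\|_{L^\infty}\le\mu$ finishes the argument.

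There is no real obstacle here; the statement is essentially a careful bookkeeping of the three ingredients in the Duhamel-type representation. The only points requiring a small amount of care are making sure that $|m|\ge 0$ is used so that the exponential is a contraction (otherwise $\sigma_b|m|$ could in principle fail to have a sign), and passing from $\tau_-$ to the full $\tau$ in order to invoke the admissibility bound on $\mu$.
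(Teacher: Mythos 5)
Your proof is correct and follows essentially the same route as the paper's: bound the exponential by $1$ using $\Sigma(|m|)\le 0$, bound $K|u|$ by $\|k\|_{L^\infty}\langle|u|\rangle$, and invoke the admissibility condition $\|\tau\|_{L^\infty}\|k\|_{L^\infty}\le\mu$. The only difference is that you spell out explicitly the non-positivity of $\Sigma(|m|)$, which the paper uses implicitly.
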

\begin{proof}
    Since $K |u|(x, \theta) \le \|k\|_{L^{\infty}(\Omega\times\bbS^{n-1}\times\bbS^{n-1})} \aver{|u|}(x)$, we  can derive
    \begin{equation}\nonumber
        \begin{aligned}
          |H(|m|) u(x, \theta)| &=\left|\int_{0}^{\tau_{-}(x, \theta)} \exp\left(\int_0^l \Sigma(|m|) (x - s\theta,\theta) ds \right) Ku(x-l\theta, \theta)dl   \right| \\
            &\le  \int_{0}^{\tau_{-}(x, \theta)} \exp\left(\int_0^l \Sigma(|m|) (x - s\theta,\theta) ds \right) K|u|(x-l\theta, \theta)dl\\
            &\le \int_{0}^{\tau_{-}(x, \theta)}  \|k\|_{L^{\infty}(\Omega\times\bbS^{n-1}\times\bbS^{n-1})}   \aver{|u|}(x-l
            \theta)dl \\
            &\le \|\tau\|_{L^{\infty} (\Omega\times\bbS^{n-1})} \|k\|_{L^{\infty}(\Omega\times\bbS^{n-1}\times\bbS^{n-1})}  \|\aver{|u|}\|_{L^{\infty}(\Omega)}\\
            &\le \mu  \|\aver{|u|}\|_{L^{\infty}(\Omega)}.
        \end{aligned}
    \end{equation}
\end{proof}

In particular, this shows that the  operator $H(|m|)$ is a contraction in $L^\infty(\Omega, L^1( \bbS^{n-1}))$. 

\begin{lemma}\label{lemma1}
If $(\sigma_a, \sigma_b, k, f_{-})$ is {admissible} and if $m\in L^\infty(\Omega)$, then the linear initial value problem 
\begin{equation}\label{EQ: ITER RTE}
  \begin{aligned}
    \left(T + \Sigma(|m|)   + K \right) u  &= 0\, &\text{ in } &\Omega \times \mathbb{S}^{n-1}, \\
    u(x, \theta) &= f_{-}(x, \theta)\, &\text{ on }& \Gamma_{-}
  \end{aligned}
\end{equation}
has a unique solution $u\in L^\infty(\Omega, L^1( \bbS^{n-1}))\cap \mathcal{H}^1(\Omega\times\mathbb{S}^{n-1}) $, 
and this solution  satisfies 
\[
\|\aver{|u|}\|_{L^{\infty}(\Omega)}  \le  \frac{1}{1-\mu}   \|f_{-}\|_{\ast}.
\]
\end{lemma}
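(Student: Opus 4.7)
The plan is to recast the boundary value problem as a fixed-point equation in $L^\infty(\Omega, L^1(\bbS^{n-1}))$, exploit the contraction bound already furnished by Lemma~\ref{LEM: H}, and then recover the $\cH^1$ regularity directly from the equation.

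First, I would integrate along characteristics. For fixed $\theta$, equation~\eqref{EQ: ITER RTE} is a linear first-order ODE in $t\mapsto u(x-(t-\tau_-(x,\theta))\theta,\theta)$ with coefficient $\Sigma(|m|)$ and source $Ku$. Duhamel's formula then gives the standard integral representation
\begin{equation}\label{EQ:plan-fixed}
u \;=\; J(|m|) f_- \;+\; H(|m|) u,
\end{equation}
which is equivalent to~\eqref{EQ: ITER RTE} in the sense of $\cH^1$ solutions (any $\cH^1$ solution satisfies \eqref{EQ:plan-fixed}, and conversely any solution of \eqref{EQ:plan-fixed} lies in $\cH^1$ once we know $u$ is integrable, by differentiating in the direction $\theta$).

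Next, I would invoke Lemma~\ref{LEM: H}: since $(\sigma_a,\sigma_b,k,f_-)$ is admissible, $H(|m|)$ is a bounded linear operator on the Banach space $X:=L^\infty(\Omega, L^1(\bbS^{n-1}))$ with operator norm at most $\mu<1$ (the lemma gives the pointwise bound $|H(|m|)v(x,\theta)|\le \mu\|\aver{|v|}\|_{L^\infty(\Omega)}$, from which averaging in $\theta$ and taking $L^\infty$ in $x$ yields $\|\aver{|H(|m|)v|}\|_{L^\infty}\le \mu\|\aver{|v|}\|_{L^\infty}$, using the normalization of $d\theta$). Since $\sigma_a,\sigma_b,|m|\ge 0$, the exponential weight in $J(|m|)$ is bounded by $1$, hence
\[
\aver{|J(|m|)f_-|}(x)\;\le\; \int_{\bbS^{n-1}}|f_-(x-\tau_-(x,\theta)\theta,\theta)|\,d\theta\;\le\;\|f_-\|_\ast,
\]
so $J(|m|)f_-\in X$ with $\|\aver{|J(|m|)f_-|}\|_{L^\infty}\le \|f_-\|_\ast$. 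Banach's fixed-point theorem then delivers a unique $u\in X$ solving \eqref{EQ:plan-fixed}, and iterating the contraction estimate on the Neumann series $u=\sum_{j\ge 0}H(|m|)^j J(|m|)f_-$ gives
\[
\|\aver{|u|}\|_{L^\infty(\Omega)}\;\le\;\sum_{j=0}^\infty \mu^j\,\|f_-\|_\ast\;=\;\frac{\|f_-\|_\ast}{1-\mu},
\]
which is the claimed bound.

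Finally, for the $\cH^1$ membership: since $\Omega$ is bounded and $u\in L^\infty(\Omega,L^1(\bbS^{n-1}))$, we automatically have $u\in L^1(\Omega\times\bbS^{n-1})$ with $\|u\|_{L^1}\le |\Omega|\,\|\aver{|u|}\|_{L^\infty}$. From the equation itself,
\[
\theta\cdot\nabla_x u\;=\;-(\sigma_a+\sigma_b|m|)u\,+\,Ku,
\]
and both terms on the right are in $L^1(\Omega\times\bbS^{n-1})$ because $\sigma_a,\sigma_b,m,k$ are all bounded. Thus $u\in\cH^1(\Omega\times\bbS^{n-1})$, and the trace $u|_{\Gamma_-}=f_-$ follows from the construction. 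Uniqueness of the $\cH^1$ solution of~\eqref{EQ: ITER RTE} follows because any such solution satisfies \eqref{EQ:plan-fixed} and the fixed point is unique.

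The only place that requires attention is confirming the equivalence between the integral equation~\eqref{EQ:plan-fixed} and the differential equation~\eqref{EQ: ITER RTE} within the function class $\cH^1$, but this is the classical Duhamel argument for transport equations and is not the main difficulty; the substantive analytic content is entirely encoded in the contraction estimate of Lemma~\ref{LEM: H}.
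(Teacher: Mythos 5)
Your proposal is correct and follows essentially the same route as the paper: both recast \eqref{EQ: ITER RTE} as the integral equation $u = H(|m|)u + J(|m|)f_-$, use Lemma~\ref{LEM: H} together with the bound $\aver{|J(|m|)f_-|}\le \|f_-\|_\ast$ to see that $H(|m|)$ is a contraction on $L^\infty(\Omega, L^1(\bbS^{n-1}))$, obtain the stated estimate by absorbing the $\mu$ term (equivalently, summing the Neumann series), and recover the $\cH^1$ regularity by applying $T$ to the integral equation. The only difference is organizational --- the paper derives the a priori estimate first and then reads off the contraction, while you set up the fixed point first --- which is immaterial.
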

\begin{proof}
The solution $u$ to~\eqref{EQ: ITER RTE} satisfies
\begin{equation}\label{EQ: SOL}
  u(x, \theta) = H(|m|)u(x, \theta) + J(|m|)f_{-}(x , \theta),
\end{equation}
and vice-versa, every solution to \eqref{EQ: SOL} solves \eqref{EQ: ITER RTE} (in a weak sense). 
Take the absolute value on both sides of~\eqref{EQ: SOL} and apply the operator $S$ to get, for every  $x\in\Omega$, 
\begin{equation}
  \begin{aligned}
    \aver{|u|}(x) &\le \int_{\mathbb{S}^{n-1}} |H(|m|) u (x, \theta) |d\theta  + \int_{\mathbb{S}^{n-1}} |J(|m|)f_{-}(x, \theta) |d\theta \\
    &\le \mu \|\aver{|u|}\|_{L^{\infty}(\Omega)} +\int_{\bbS^{n-1}}  |J(|0|)f_{-}(x, \theta) |d\theta \\
    &\le \mu \|\aver{|u|}\|_{L^{\infty}(\Omega)} + \|f_{-}\|_{\ast},
  \end{aligned}
\end{equation}
where we  used the Lemma~\ref{LEM: H}. 
The supremum on the left-hand-side satisfies
\begin{equation}\label{EQ: um}
\|\aver{|u|}\|_{L^{\infty}(\Omega)}  \le  \mu \|\aver{|u|}\|_{L^{\infty}(\Omega)} + \|f_{-}\|_{\ast}.
\end{equation}
In particular, this shows that the  operator $ H(|m|)$ is a contraction in $L^\infty(\Omega, L^1( \bbS^{n-1}))$, and that $J(|m|)f_{-}(x , \theta)$ belongs to that space, 
thus \eqref{EQ: SOL} is solvable in $L^\infty(\Omega, L^1( \bbS^{n-1}))$. Moreover, it satisfies the estimate in the lemma by \eqref{EQ: um}. Then we can apply $T$ to \eqref{EQ: SOL} to conclude that $u\in \mathcal{H}^1$ and solves  \eqref{EQ: TP RTE} in strong sense.  
\end{proof}

\begin{corollary}\label{COR: cor1}
Under the assumptions of Lemma~\ref{lemma1}, the solution $u$ to~\eqref{EQ: ITER RTE} also satisfies 
\begin{equation}\nonumber
  \Big\|\int_{\bbS^{n-1}} \int_{0}^{\tau_{-}(x,\theta)} |u(x - s\theta, \theta)| ds d\theta\Big \|_{L^{\infty}(\Omega)}\le\frac{\|\tau\|_{L^{\infty}(\Omega\times\bbS^{n-1})}}{1-\mu}  \|f_{-}\|_{\ast}.
\end{equation}
\end{corollary}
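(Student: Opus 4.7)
The plan is to leverage the integral fixed-point representation \eqref{EQ: SOL} already used in the proof of Lemma~\ref{lemma1}, since all needed ingredients are packaged in Lemma~\ref{LEM: H} and in the sup bound $\|\aver{|u|}\|_{L^{\infty}(\Omega)}\le\frac{1}{1-\mu}\|f_{-}\|_{\ast}$. Specifically, I would apply the triangle inequality pointwise to \eqref{EQ: SOL} to obtain
\[
|u(y,\theta)|\le|H(|m|)u(y,\theta)|+|J(|m|)f_{-}(y,\theta)|,
\]
substitute $y=x-s\theta$, and then integrate first in $s\in[0,\tau_{-}(x,\theta)]$ and then in $\theta\in\bbS^{n-1}$, estimating the two contributions separately.

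For the $H$-term, Lemma~\ref{LEM: H} furnishes the uniform pointwise bound $|H(|m|)u(y,\theta)|\le\mu\|\aver{|u|}\|_{L^{\infty}(\Omega)}$, so integration in $s$ contributes at most a factor $\tau_{-}(x,\theta)\le\|\tau\|_{L^{\infty}(\Omega\times\bbS^{n-1})}$, and integration in $\theta$ against the normalized sphere measure costs nothing, yielding an overall bound of $\mu\|\tau\|_{L^{\infty}(\Omega\times\bbS^{n-1})}\|\aver{|u|}\|_{L^{\infty}(\Omega)}$. For the $J$-term, the key geometric observation is that the boundary foot-point is constant along the characteristic, namely $(x-s\theta)-\tau_{-}(x-s\theta,\theta)\theta=x-\tau_{-}(x,\theta)\theta$ for every $s\in[0,\tau_{-}(x,\theta)]$; combined with the fact that the exponential factor in $J(|m|)$ has nonpositive exponent, this gives
\[
|J(|m|)f_{-}(x-s\theta,\theta)|\le|f_{-}(x-\tau_{-}(x,\theta)\theta,\theta)|.
\]
Integrating in $s$ again produces at most $\|\tau\|_{L^{\infty}(\Omega\times\bbS^{n-1})}$, while integrating in $\theta$ and taking the essential supremum over $x$ absorbs into $\|f_{-}\|_{\ast}$ by the very definition of that norm.

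Combining the two contributions and invoking the sup estimate from Lemma~\ref{lemma1} gives
\[
\|\tau\|_{L^{\infty}(\Omega\times\bbS^{n-1})}\Big(\tfrac{\mu}{1-\mu}+1\Big)\|f_{-}\|_{\ast}=\tfrac{\|\tau\|_{L^{\infty}(\Omega\times\bbS^{n-1})}}{1-\mu}\|f_{-}\|_{\ast},
\]
which is precisely the asserted inequality. I do not anticipate any real obstacle here: the argument is essentially the contraction estimate of Lemma~\ref{lemma1} ``telescoped'' along the entire ray instead of only its endpoint. The only delicate point is to exploit the constancy of the boundary foot-point along each characteristic so that, after the $s$-integration is cleanly bounded by $\|\tau\|_{L^{\infty}(\Omega\times\bbS^{n-1})}$, the remaining $\theta$-integral of $|J(|m|)f_{-}|$ folds exactly into the $\|\cdot\|_{\ast}$ norm of $f_{-}$.
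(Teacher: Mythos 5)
Your proposal is correct and follows essentially the same route as the paper: bound $|u(x-s\theta,\theta)|$ pointwise via \eqref{EQ: SOL} using Lemma~\ref{LEM: H} for the $H$-term and the constancy of the boundary foot-point along each characteristic (plus the nonpositive exponent) for the $J$-term, then integrate in $s$ and $\theta$ and invoke the estimate of Lemma~\ref{lemma1}, with the same arithmetic $\frac{\mu}{1-\mu}+1=\frac{1}{1-\mu}$ closing the argument. No gaps.
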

\begin{proof}
Using the estimate from Lemma~\ref{LEM: H}, and the equation~\eqref{EQ: SOL}, $\forall s\in [0, \tau_{-}(x, \theta)]$, 
\begin{equation}
  \begin{aligned}
    |u(x-s\theta, \theta)|&\le \mu \|\aver{|u|}\|_{L^{\infty}(\Omega)} + |f_{-}(x - s\theta - \tau_{-}(x-s\theta, \theta)\theta, \theta|  \\
    &= \mu \|\aver{|u|}\|_{L^{\infty}(\Omega)} + |f_{-}(x - \tau_{-}(x, \theta)\theta, \theta)|.
  \end{aligned}
\end{equation}
Apply the integrals with respect to $s$ and $\theta$, we obtain
\begin{equation}
  \int_{\bbS^{n-1}} \int_{0}^{\tau_{-}(x,\theta)} |u(x - s\theta, \theta)| ds d\theta \le \tau_{-}(x, \theta) \left( \mu \|\aver{|u|}\|_{L^{\infty}(\Omega)} + \int_{\mathbb{S}^{n-1}} |f_{-}(x - \tau_{-}(x,\theta)\theta, \theta)| d\theta\right). 
\end{equation}
Then take the supremum on both sides and use the conclusion of Lemma~\ref{lemma1} to get 
\begin{equation}
  \begin{aligned}
    \Big\|\int_{\bbS^{n-1}} \int_{0}^{\tau_{-}(x,\theta)} |u(x - s\theta, \theta)| ds d\theta\Big\|_{L^{\infty}(\Omega)} 
  &\le \|\tau\|_{L^{\infty}(\Omega\times\bbS^{n-1})}\left( \mu \|\aver{|u|}\|_{L^{\infty}(\Omega)} + \|f_{-}\|_{\ast}\right)\\  
  &\le  \|\tau\|_{L^{\infty}(\Omega\times\bbS^{n-1})}\frac{1}{1-\mu} \left(  \|f_{-}\|_{\ast}\right). 
  \end{aligned}
\end{equation}
\end{proof}

\begin{lemma}\label{LEM: UNIQUE}
  If $(\sigma_a, \sigma_b, k, f_{-})$ is {admissible}, then the radiative transport equation~\eqref{EQ: TP RTE} permits a unique solution $u(x, \theta)\in \mathcal{H}^1(\Omega\times\mathbb{S}^{n-1})$, and 
  \begin{equation}\label{EQ:lemma1}
      \|\aver{|u|}\|_{L^{\infty}(\Omega)} \le \frac{1}{1-\mu}\|f_{-}\|_{\ast}.
  \end{equation}
  In addition, if there exists a constant $c_0\ge0$ such that $f_{-}\ge c_0 $, then there is a constant $C = C(\Omega, \sigma_a, \sigma_b, k, f_{-}) > 0$ such that $u(x, \theta)\ge C c_0$. 
\end{lemma}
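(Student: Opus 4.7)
The plan is to reduce the nonlinear problem~\eqref{EQ: TP RTE} to a fixed point problem for the linear solver of Lemma~\ref{lemma1}, and then to read off positivity from the integral representation. Regard $\phi := |\aver{u}|$ as an unknown nonnegative function: for each $\phi \in L^\infty(\Omega)$ with $\phi \geq 0$, Lemma~\ref{lemma1} produces a unique $u_\phi$ solving $(T + \Sigma(\phi) + K)u_\phi = 0$ with $u_\phi|_{\Gamma_-} = f_-$ and $\|\aver{|u_\phi|}\|_{L^\infty(\Omega)} \leq (1-\mu)^{-1}\|f_-\|_*$. Define $\Phi(\phi) := |\aver{u_\phi}|$. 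A fixed point $\phi^\star$ of $\Phi$ then produces a solution of~\eqref{EQ: TP RTE}, and conversely any $\mathcal{H}^1$-solution $u$ yields the fixed point $|\aver{u}|$, so uniqueness for $\Phi$ will imply uniqueness for~\eqref{EQ: TP RTE}.

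I would apply Banach's fixed point theorem on the closed set
\[
B := \{\phi \in L^\infty(\Omega) : 0 \leq \phi \leq (1-\mu)^{-1}\|f_-\|_*\},
\]
which by Lemma~\ref{lemma1} is invariant under $\Phi$. For the contraction, let $\phi_1,\phi_2 \in B$ and write $v := u_{\phi_1} - u_{\phi_2}$. Subtracting the two linear equations, $v$ satisfies $(T+\Sigma(\phi_1)+K)v = \sigma_b(\phi_1-\phi_2)u_{\phi_2}$ with $v|_{\Gamma_-} = 0$. Duhamel's formula along characteristics, combined with $\exp\bigl(\int_0^l \Sigma(\phi_1)\bigr) \leq 1$, yields the pointwise bound
\begin{equation}\nonumber
|v(x,\theta)| \leq |H(\phi_1)v(x,\theta)| + \|\sigma_b\|_\infty\,\|\phi_1-\phi_2\|_{L^\infty(\Omega)} \int_0^{\tau_-(x,\theta)} |u_{\phi_2}(x-l\theta,\theta)|\,dl.
\end{equation}
Integrating in $\theta$, applying Lemma~\ref{LEM: H} to the $H$-term and Corollary~\ref{COR: cor1} to the double-integral term, I obtain
\begin{equation}\nonumber
\|\aver{|v|}\|_{L^\infty(\Omega)} \leq \mu\,\|\aver{|v|}\|_{L^\infty(\Omega)} + \frac{\|\sigma_b\|_\infty\,\|\tau\|_\infty\,\|f_-\|_*}{1-\mu}\,\|\phi_1-\phi_2\|_{L^\infty(\Omega)}.
\end{equation}
Since $|\Phi(\phi_1)-\Phi(\phi_2)| \leq \aver{|v|}$, condition~(3) of Definition~\ref{def_adm} then forces $\|\Phi(\phi_1)-\Phi(\phi_2)\|_{L^\infty(\Omega)} \leq \nu\,\|\phi_1-\phi_2\|_{L^\infty(\Omega)}$ with $\nu<1$, and Banach supplies a unique $\phi^\star$. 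The solution $u := u_{\phi^\star}$ then inherits~\eqref{EQ:lemma1} directly from Lemma~\ref{lemma1}.

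For the positivity assertion, I would return to the identity $u = J(\phi^\star)f_- + H(\phi^\star)u$. Because $k, \sigma_a, \sigma_b, \phi^\star \geq 0$, the operator $H(\phi^\star)$ preserves nonnegativity, so the Neumann series arising from the contraction in $L^\infty(\Omega, L^1(\bbS^{n-1}))$ gives $u \geq 0$ whenever $f_- \geq 0$. Dropping the nonnegative $H$-term and bounding the exponent in $J(\phi^\star)f_-$ by $\|\tau\|_\infty\bigl(\|\sigma_a\|_\infty + \|\sigma_b\|_\infty\|f_-\|_*/(1-\mu)\bigr)$, the hypothesis $f_- \geq c_0$ produces
\begin{equation}\nonumber
u(x,\theta) \geq c_0\,\exp\!\left(-\|\tau\|_\infty\Big(\|\sigma_a\|_\infty + \tfrac{\|\sigma_b\|_\infty\|f_-\|_*}{1-\mu}\Big)\right) =: C\,c_0,
\end{equation}
as required.

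I expect the main technical point to be the calibration of constants in the contraction estimate: the source $\sigma_b(\phi_1-\phi_2)u_{\phi_2}$ must be measured through the line-integral bound of Corollary~\ref{COR: cor1} (rather than any pointwise bound on $u_{\phi_2}$), and this is exactly what makes the resulting Lipschitz constant $\|\sigma_b\|_\infty\|\tau\|_\infty\|f_-\|_*/(1-\mu)^2$ line up with admissibility condition~(3) and stay strictly below~$1$.
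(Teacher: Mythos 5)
Your proposal is correct and follows essentially the same route as the paper: a Banach fixed point argument for the map $\phi\mapsto|\aver{u_\phi}|$ on the ball of radius $(1-\mu)^{-1}\|f_-\|_*$, with the contraction constant obtained exactly as in the paper by feeding the source $\sigma_b(\phi_1-\phi_2)u_{\phi_2}$ through Corollary~\ref{COR: cor1} and matching it against condition~(3) of Definition~\ref{def_adm}, and the same lower bound $u\ge J(\phi^\star)f_-$ for positivity. No substantive differences.
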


\begin{proof}
The proof is based on the Banach fixed point theorem.  Define the mapping $\mathcal{C}: L^{\infty}(\Omega)\mapsto L^{\infty}(\Omega)$ by $\mathcal{C} (m) := \aver{u}$, where $u(x, \theta)$ solves \eqref{EQ: ITER RTE}. 
Define the sets of functions $\mathcal{M}$ and $\cM_{+}$ by $$\mathcal{M}:= \Bigg\{ m \in L^{\infty}(\Omega) : |m(x)|\le \frac{1}{1-\mu}\|f_{-}\|_{\ast}\Bigg\},$$
and 
$$\mathcal{M}_{+}:= \Bigg\{ m \in L^{\infty}(\Omega) : 0\le m(x) \le \frac{1}{1-\mu}\|f_{-}\|_{\ast} \Bigg\}.$$
We prove that $\mathcal{C}$ is a contraction mapping on $\mathcal{M}$ (resp. $\cM_{+}$) with the $L^{\infty}(\Omega)$ metric. First we show that $\mathcal{C}:\cM\to \cM$. If $m\in \cM$, the solution to~\eqref{EQ: ITER RTE} will satisfy \eqref{EQ: SOL}. 
Take the absolute value on both sides of~\eqref{EQ: SOL} and apply the operator $S$.  By \eqref{EQ:lemma1},  $\aver{|u|}\in \mathcal{M}$, hence $\aver{u}\in \mathcal{M}$. 
When $f_{-}\ge 0$, from the theory of linear transport~\cite{dautray2012mathematical},
 the solution $u(x, \theta)$ to~\eqref{EQ: SOL} is non-negative as well through a fixed point iteration, thus we have the mapping $\mathcal{C}:\cM_{+}\to \cM_{+}$. 
In the next, we show  $\mathcal{C}$ is indeed a contraction mapping on both sets. Let $m_1, m_2\in\mathcal{M}$ (resp. $\cM_{+}$) and $u_1, u_2$ be the solutions to~\eqref{EQ: ITER RTE}, respectively. Denote $w = u_1 - u_2$, then
\begin{equation}
  \begin{aligned}
    \left( T + \Sigma(|m_1|) +K \right)w &= \sigma_bu_2(|m_1| - |m_2|), &&\text{in  $\Omega \times \mathbb{S}^{n-1}$}, \\
    w(x, \theta) &= 0\, &&\text{on $\Gamma_{-}$}.
  \end{aligned}
\end{equation}
Let $q(x, \theta):= \sigma_bu_2(|m_1| - |m_2|)$, then the solution $w(x, \theta)$ solves 
\begin{equation}
  w(x, \theta) = H(|m_1|) w(x, \theta) + \int_{0}^{\tau_{-}(x, \theta)} \exp\left(\int_{0}^l \Sigma(|m_1|)(x-s\theta) ds \right) q(x - l\theta, \theta) dl .
\end{equation}
Apply the integral operator $S$ on the second term on the right-hand-side to get
\begin{equation}\nonumber
  \begin{aligned}
   \bigg|\int_{\bbS^{n-1}}\int_{0}^{\tau_{-}(x, \theta)} & \exp\left(\int_{0}^l \Sigma(|m_1|)(x-s\theta) ds \right) q(x - l\theta, \theta) dl d\theta \bigg| \\
&\le \left\|\sigma_b (||m_1| - |m_2||)\right\|_{L^{\infty}(\Omega\times\mathbb{S}^{n-1})} 
    \bigg|\int_{\bbS^{n-1}}\int_{0}^{\tau_{-}(x, \theta)} |u_2(x - l\theta, \theta)| dl d\theta \bigg| \\&\le 
    \|\tau\|_{L^{\infty}(\Omega\times\bbS^{n-1})}  \left\|\sigma_b (||m_1| - |m_2||)\right\|_{L^{\infty}(\Omega\times\mathbb{S}^{n-1})} \frac{1}{1-\mu} \|f_{-}\|_{\ast}.
  \end{aligned}
\end{equation}
The last inequality comes directly from Corollary~\ref{COR: cor1}. As in the proof of Lemma~\ref{lemma1},
\begin{equation}\label{eq:15}
  \begin{aligned}
    \aver{|w|}(x) \le \mu \|\aver{|w|}\|_{L^{\infty}(\Omega)} +\frac{\|\tau\|_{L^{\infty}(\Omega\times\bbS^{n-1})}  \left\|\sigma_b |m_1 - m_2| \right\|_{L^{\infty}(\Omega\times\mathbb{S}^{n-1})}}{1-\mu} \|f_{-}\|_{\ast},
  \end{aligned}
\end{equation}
where we have used the triangle inequality $||m_1| -|m_2||\le |m_1 - m_2|$. Then use  $|\aver{w}(x) |\le \aver{|w|}(x)$ and $\aver{u_2}\in\cM$ (resp. $\cM_{+}$ when  $f_{-}(x, \theta)\ge 0$) to get
\begin{equation}\label{eq:16}
  \begin{aligned}
    |\aver{w}(x)|&\le \frac{\|\tau\|_{L^{\infty}(\Omega\times\bbS^{n-1})} \left\|\sigma_b |m_1 - m_2| \right\|_{L^{\infty}(\Omega\times\mathbb{S}^{n-1})} }{(1-\mu)^2} \|f_{-}\|_{\ast}.
  \end{aligned}
\end{equation}
By condition~(3) in Definition~\ref{def_adm},  $\cC$ is a contraction mapping on both $\cM$ and $\cM_{+}$ with the $L^{\infty}(\Omega)$ metric.  
Then by the Banach fixed point theorem, $\cC$ has a unique fixed point in $\cM$ (resp. $\cM_{+}$ when  $f_{-}(x, \theta)\ge 0$). Then \eqref{EQ:lemma1} follows  from  Lemma~\ref{lemma1}.  In particular, when $f_{-}(x,\theta)\ge c_0 > 0$, then $u(x, \theta)\ge 0$ and $0\le \aver{u}\le \frac{1}{1-\mu} \|f_{-}\|_{\ast}$,  therefore 
\begin{equation}
  \begin{aligned}
    u(x, \theta) &= H(\aver{u}) u(x, \theta) + J(\aver{u}) f_{-}(x, \theta) \ge J(\frac{1}{1-\mu} \|f_{-}\|_{\ast}) f_{-}(x, \theta) \\
    &\ge   c_0 \exp\left(-\textrm{diam}(\Omega) \left( \|\sigma_a\|_{L^{\infty}(\Omega\times \bbS^{n-1})} +  \frac{1}{1-\mu}\|\sigma_b\|_{L^{\infty}(\Omega\times\bbS^{n-1})} \|f_{-}\|_{\ast} \right) \right).
  \end{aligned}
\end{equation}
\end{proof}
\begin{remark}
  The mapping $\mathcal{C}$ may not be compact when $f_{-}\in L^1_S(\Gamma_{-}, d\xi)$,  therefore the Schauder fixed point theorem does not apply.
\end{remark}

\section{Main theorems}\label{SEC: MAIN}
In this section, we show that the nonlinear albedo operator determines the three  coefficients $\sigma_a$, $\sigma_b$, $k$, under the condition  $\sigma_a(x, \theta) = \sigma_a(x)$ and $\sigma_b(x, \theta) = \sigma_b(x)$. In the following, we consider a source function $f_{-}(x, \theta)$ in the form of 
\begin{equation}\label{EQ:f}
  f_{-}(x, \theta) = f_0(x, \theta) + \delta f_1(x, \theta)
\end{equation} 
with $\delta\to 0$  a scaling parameter, with $f_i\in L^1_S(\Gamma_{-}, 
 d {\xi})$  non-negative, $i=1,2$.  
Formally, the non-negative solution $u$ expands as 
\begin{equation}\label{EQ:21}
  u(x, \theta) = u_0(x, \theta) + \delta u_1(x, \theta) + \delta^2 u_2(x, \theta) +\cdots .
\end{equation}
Then $u_0$ and $u_1$ will satisfy the equations 
\begin{equation}\label{EQ: U0}
  \begin{aligned}
    ( T + \Sigma(\aver{u_0}) + K ) u_0 &= 0\, &\text{ in } &\Omega \times \mathbb{S}^{n-1}, \\
    u_0(x, \theta) &= f_0(x, \theta)\, &\text{ on }& \Gamma_{-}, 
  \end{aligned}
\end{equation}
and 
\begin{equation}\label{EQ: U1}
  \begin{aligned}
    ( T + \Sigma(\aver{u_0}) + K ) u_1 &= -\sigma_b \aver{u_1} u_0\, &\text{ in } &\Omega \times \mathbb{S}^{n-1}, \\
    u_1(x, \theta) &= f_1(x, \theta)\, &\text{ on }& \Gamma_{-}.
  \end{aligned}
\end{equation}
When the coefficients are admissible and $f_0=0$, then the equation~\eqref{EQ: U0} has unique solution $u_0 = 0$; and the equation~\eqref{EQ: U1} becomes the linear transport equation. Then one can follow the method in~\cite{Ch-St-Osaka} to decompose the singularities, which leads to the reconstruction of $\sigma_a$ and $k$,  the latter requires dimension $n\ge 3$.
 After the coefficients $\sigma_a$ and $k$ are recovered, we can select arbitrary nonzero $f_0\in L_S^1(\Gamma_{-}, d {\xi})$ such that $u_0$ is non-singular. Then in the equation~\eqref{EQ: U1}, the most singular part in the solution will come from the source $f_1$ if we select it to be singular in angular variable $\theta$. Therefore $\Sigma(\aver{u_0})$ can be recovered, and then $u_0$ can be solved from~\eqref{EQ: U0}, which finally reconstructs $\sigma_b$. In the following, we rigorously prove these claims.

\subsection{Reconstruction of $\sigma_a$} 
In next theorem, we show that we can recover the X-ray transform of $\sigma_a(x,\theta)$. As a corollary, if $\sigma_a$ is $\theta$-independent, one recovers it through the inverse X-ray transform~\cite{Novikov}. 

Here and below, we take sources approximating singular ones in the spirit of \cite{Ch-St-Osaka}. 
Let $B_1$ be the unit ball centered at origin in $\bbR^n$, $h\in C_0^{\infty}(B_1)$ with $0\le h\le 1$ and $h\equiv 1$ near origin be a cut-off function. Given $\theta'\in\bbS^{n-1}$, define the source function 
\begin{equation}\label{EQ:f_-}
f_{-}^{\eps, \delta}(x, \theta; \theta') =  \frac{\delta}{\omega_{n-1}\eps^{n-1}}h\left(\frac{\theta - \theta'}{\eps}\right),
\end{equation}
 where $\delta, \eps > 0$ are small parameters such that $f_{-}^{\eps, \delta}\in L_{-}^S(\Gamma_{-}, d\xi)$ and $\omega_{n-1}$ is the constant defined by
\begin{equation}\label{EQ: OMEGA}
   \omega_{n-1}: = \lim_{\eps\to 0} \int_{\bbS^{n-1}} \frac{1}{\eps^{n-1}} h\left(\frac{\theta - \theta'}{\eps}\right) d\theta.
\end{equation}
We view $f_{-}^{\eps, \delta}$ as $\delta$ times an approximation (a Friedrichs' mollifier) of the delta function $\delta_{\theta'}(\theta)$ on the sphere. Then $f_{-}^{\eps, \delta}$ plays the role of $\delta f_1$ in \eqref{EQ:f} with $f_0=0$ there.

\begin{theorem}
    Let $f_{-} = f_{-}^{\eps, \delta}$ and assume the tuple $(\sigma_a, \sigma_b, k, f_{-})$ is \emph{admissible}, then 
    \begin{equation}\nonumber
      \lim_{\gamma\to 0}\lim_{\eps,\delta\to 0} \int_{\mathbb{S}^{n-1}} \frac{u^{\eps,\delta}(x, \theta)}{\delta} h\left(\frac{\theta - \theta'}{\gamma}\right) d\theta = \exp\left(-\int_{0}^{\tau_{-}(x, \theta')}\sigma_a(x -s\theta', \theta')ds\right),
    \end{equation}
    where $u^{\eps,\delta}$ is the unique solution to~\eqref{EQ: TP RTE} with boundary condition $f_{-}^{\eps,\delta}$.
\end{theorem}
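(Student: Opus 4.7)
The plan is to rescale and linearize, then decompose the linearized solution into a ballistic part carrying the singularity and a scattered part that contributes only in a vanishing $\theta$-neighborhood of $\theta'$. Set $v^{\eps,\delta} := u^{\eps,\delta}/\delta$. Since $\|f_-^{\eps,\delta}\|_* = \mathcal{O}(\delta)$ by the choice of normalization in \eqref{EQ: OMEGA}, Lemma~\ref{LEM: UNIQUE} gives $\|\aver{|u^{\eps,\delta}|}\|_{L^\infty(\Omega)}=\mathcal{O}(\delta)$. Writing the integral equation \eqref{EQ: SOL} for $v^{\eps,\delta}$, the $\sigma_b$-term enters only through the factor $\delta\, \sigma_b \aver{|v^{\eps,\delta}|}$ inside the exponential attenuation. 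Hence, for each fixed $\eps>0$, the family $v^{\eps,\delta}$ converges as $\delta\to 0$ to the solution $v^\eps$ of the linear transport equation $(T-\sigma_a+K)v^\eps=0$ on $\Omega\times\bbS^{n-1}$ with boundary value $(\omega_{n-1}\eps^{n-1})^{-1}h((\theta-\theta')/\eps)$ on $\Gamma_-$, with a uniform $L^\infty(\Omega, L^1(\bbS^{n-1}))$ bound.

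Next, decompose $v^\eps=v^\eps_b+v^\eps_s$ via the Duhamel formula, where
\[
v^\eps_b(x,\theta)=\frac{1}{\omega_{n-1}\eps^{n-1}}\,h\!\left(\frac{\theta-\theta'}{\eps}\right)E(x,\theta),\qquad E(x,\theta):=\exp\!\left(-\!\int_0^{\tau_-(x,\theta)}\sigma_a(x-s\theta,\theta)\,ds\right),
\]
and $v^\eps_s=(I-H(0))^{-1}H(0)v^\eps_b$; the Neumann series converges because $H(0)$ is a contraction by Lemma~\ref{LEM: H}. For the ballistic contribution, fix $\gamma>0$ and take $\eps<\gamma/2$ so that $h((\theta-\theta')/\gamma)\equiv 1$ on the support of $h((\theta-\theta')/\eps)$; then
\[
\int_{\bbS^{n-1}} v^\eps_b(x,\theta)\,h\!\left(\frac{\theta-\theta'}{\gamma}\right)d\theta=\frac{1}{\omega_{n-1}\eps^{n-1}}\int_{\bbS^{n-1}} h\!\left(\frac{\theta-\theta'}{\eps}\right)E(x,\theta)\,d\theta\;\longrightarrow\;E(x,\theta')
\]
as $\eps\to 0$, by a standard spherical mollifier argument applied at a Lebesgue point of $\theta\mapsto E(x,\theta)$.

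For the scattered contribution, observe that the first term in the Neumann series is the single-scattering
\[
H(0)v^\eps_b(x,\theta)=\int_0^{\tau_-(x,\theta)} e^{-\!\int_0^l\sigma_a(x-r\theta,\theta)\,dr}\!\!\int_{\bbS^{n-1}} k(x-l\theta,\theta'',\theta)\,v^\eps_b(x-l\theta,\theta'')\,d\theta''\,dl,
\]
which converges as $\eps\to 0$ to $\int_0^{\tau_-(x,\theta)}E(x,\theta;l)\,k(x-l\theta,\theta',\theta)\,E(x-l\theta,\theta')\,dl\in L^\infty(\Omega\times\bbS^{n-1})$. Iterating $H(0)$ then gives convergence $v^\eps_s\to v^0_s$ in $L^\infty(\Omega,L^1(\bbS^{n-1}))$ with $v^0_s(x,\cdot)\in L^1(\bbS^{n-1})$. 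Therefore
\[
\left|\int_{\bbS^{n-1}} v^0_s(x,\theta)\,h\!\left(\frac{\theta-\theta'}{\gamma}\right)d\theta\right|\le \int_{|\theta-\theta'|\le\gamma}|v^0_s(x,\theta)|\,d\theta\;\longrightarrow\;0
\]
as $\gamma\to 0$ by absolute continuity of the integral. Combining the two pieces after the iterated limit yields the claimed formula.

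The main obstacle is the commutation of limits: one must show that the $\delta\to 0$ and $\eps\to 0$ limits genuinely go through inside the integral against $h((\theta-\theta')/\gamma)$ for fixed $\gamma>0$, even though $v^\eps$ diverges as a distribution. This is where the splitting $v^\eps=v^\eps_b+v^\eps_s$ is essential, since only $v^\eps_b$ is singular and its singularity is explicit, while $v^\eps_s$ remains in a good function space uniformly in $\eps$. The secondary subtlety is that $E(x,\theta)$ need only be continuous at $\theta'$ along the Lebesgue set of $\sigma_a(\cdot,\theta')$, so the conclusion is sharpest when $\sigma_a$ is $\theta$-independent (as in the corollary invoking Novikov's inversion); for general $L^\infty$ coefficients, the identity holds for almost every $\theta'\in\bbS^{n-1}$.
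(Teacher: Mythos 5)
Your argument follows essentially the same route as the paper's proof: the uniform bound $\|\aver{|u^{\eps,\delta}|}\|_{L^{\infty}(\Omega)}=\mathcal{O}(\delta)$ from Lemma~\ref{LEM: UNIQUE} reduces the problem to the linear equation, and the ballistic/scattered splitting isolates the explicit singular part, whose concentration at $\theta'$ against the window $h((\theta-\theta')/\gamma)$ yields the attenuation exponential, while the scattered part is regular in $\theta$ and is killed in the $\gamma\to0$ limit (the paper does this with a one-step decomposition $w^\eps=J(0)f+H(0)w^\eps$ and a uniform $L^\infty$ bound on $H(0)w^\eps$ giving $\mathcal{O}(\gamma^{n-1})$, rather than your Neumann-series resummation and $L^1(\bbS^{n-1})$ absolute-continuity argument, but the two are interchangeable). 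The one point to tighten is that the theorem asserts the joint limit $\lim_{\eps,\delta\to0}$ while you send $\delta\to0$ at fixed $\eps$; since your $\mathcal{O}(\delta)$ bound is uniform in $\eps$, writing the comparison $\phi=\delta^{-1}u^{\eps,\delta}-w^\eps$ and estimating its source term by $\|\sigma_b\aver{u^{\eps,\delta}}\|_{L^\infty(\Omega)}=\mathcal{O}(\delta)$, as the paper does, upgrades your iterated limit to the joint one with no new ideas.
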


\begin{proof}
    Let $w^{\eps}$ be the unique solution to the following radiative transport equation:
\begin{equation}
  \begin{aligned}
    ( T + \Sigma(0) + K ) w^{\eps} &= 0 &\text{ in } &\Omega \times \mathbb{S}^{n-1}, \\
    w^{\eps}(x, \theta) &= \frac{1}{\omega_{n-1}\eps^{n-1}} h\left(\frac{\theta - \theta'}{\eps}\right) \, &\text{ on }& \Gamma_{-}.
  \end{aligned}
\end{equation}
The solution $w^{\eps}$ then satisfies  
\begin{equation}\label{EQ: W2}
    \begin{aligned}
        w^{\eps}(x, \theta) &= \frac{1}{\omega_{n-1}\eps^{n-1}} h\left(\frac{\theta - \theta'}{\eps}\right) \exp\left(-\int_{0}^{\tau_{-}(x, \theta)}\sigma_a(x -s\theta, \theta)ds\right) + H(0) w^{\eps}, 
    \end{aligned}
\end{equation}
where $|H(0) w^{\eps}|\le \mu \|\aver{w^{\eps}}\|_{L^{\infty}(\Omega)}$ which is uniformly bounded from Lemma~\ref{LEM: H}. Therefore, the following iterated limit holds
\begin{equation}\label{EQ: W}
  \begin{aligned}
    &\lim_{\gamma\to 0}\lim_{\eps\to 0} \int_{\bbS^{n-1}} w^{\eps}(x, \theta)  h\left(\frac{\theta - \theta'}{\gamma}\right)  d\theta \\
     = & \lim_{\gamma\to 0}\lim_{\eps\to 0} \int_{\bbS^{n-1}} \frac{1}{\omega_{n-1}\eps^{n-1}} h\left(\frac{\theta - \theta'}{\eps}\right) \exp\left(-\int_{0}^{\tau_{-}(x, {\theta})}\sigma_a(x -s{\theta}, {\theta})ds\right)   h\left(\frac{\theta - \theta'}{\gamma}\right)  d\theta 
    \\
    &\qquad + \lim_{\gamma\to 0}\lim_{\eps\to 0} \int_{\bbS^{n-1}} H(0) w^{\eps}(x, \theta)  h\left(\frac{\theta - \theta'}{\gamma}\right)  d\theta \\
    =&\exp\left(-\int_{0}^{\tau_{-}(x, \theta')}\sigma_a(x -s\theta', \theta')ds\right).
  \end{aligned}
\end{equation}
The term containing  $H(0)$  vanishes because when $\gamma\to 0$, 
\begin{equation}
  \begin{aligned}
    \left|\int_{\bbS^{n-1}} H(0) w^{\eps}(x, \theta)  h\left(\frac{\theta - \theta'}{\gamma}\right)  d\theta \right| &\le  \mu \|\aver{w^{\eps}}\|_{L^{\infty}(\Omega)}\int_{\bbS^{n-1}} h\left(\frac{\theta - \theta'}{\gamma}\right)  d\theta  \to 0.
  \end{aligned}
\end{equation}
Denote $\phi = \frac{1}{\delta} u^{\eps,\delta} - w^{\eps}$, then 
\begin{equation}
  \begin{aligned}
    (T + \Sigma(|\aver{u^{\eps,\delta}}|) + K)\phi&= \sigma_b |\aver{u^{\eps,\delta}}| w^{\eps} &\text{ in } &\Omega \times \mathbb{S}^{n-1}, \\
   \phi(x, \theta) &= 0\, &\text{ on }& \Gamma_{-}.
  \end{aligned}
\end{equation}
Then one can show that $\phi(x, \theta) = \cL_1(x, \theta) + \cL_2(x, \theta) $, where 
\begin{equation}\label{EQ: PHI}
  \begin{aligned}
    \cL_1 &= \int_0^{\tau_{-}(x, \theta)} \exp\left(\int_0^{l} \Sigma(|\aver{u^{\eps,\delta}}|)(x - s\theta, \theta) ds\right)  K \phi(x - l\theta, \theta) dl, \\ 
   \cL_2 &=-  \int_0^{\tau_{-}(x, \theta)} \exp\left(\int_0^{l} \Sigma(|\aver{u^{\eps,\delta}}|)(x - s\theta, \theta) ds\right)\sigma_b |\aver{u^{\eps,\delta}}|w^{\eps}(x - l\theta, \theta)   dl .
  \end{aligned}
\end{equation}
The first term $\cL_1$ is uniformly bounded in $L^{\infty}$ norm, this could be derived from the Lemma~\ref{LEM: H} and Lemma~\ref{LEM: UNIQUE} by observing that 
\begin{equation}
    \frac{1}{\eps^{n-1}}\int_{\mathbb{S}^{n-1}} h\left(\frac{\theta - \theta'}{\eps}\right) d\theta = \int_{\frac{1}{\eps}\bbS^{n-1}} h(\theta - \theta') d\theta \le c|\partial B_1|
\end{equation}
for some absolute constant $c > 0$. Therefore, 
\begin{equation}\label{EQ: L1}
 \int_{\bbS^{n-1}} \cL_1(x, \theta) h\left(\frac{\theta - \theta'}{\gamma}\right) d\theta  = \cO(\gamma^{n-1})\to 0, \text{ as } \gamma\to 0.
\end{equation}
For the second term $\cL_2$ we have 
\begin{equation}\nonumber
    \begin{aligned}
        \left|\int_{\bbS^{n-1}} \cL_2(x, \theta)  h\left(\frac{\theta - \theta'}{\gamma}\right) d\theta \right|&\le \int_{\bbS^{n-1}}\int_0^{\tau_{-}(x, \theta)}\sigma_b |\aver{u^{\eps,\delta}}|w^{\eps}(x - l\theta, \theta)   h\left(\frac{\theta - \theta'}{\gamma}\right) d\theta dl\\
        &\le \|\sigma_b\aver{u^{\eps,\delta}}\|_{L^{\infty}(\Omega)}  \int_{\bbS^{n-1}}\int_0^{\tau_{-}(x, \theta)} w^{\eps}(x - l\theta, \theta)   h\left(\frac{\theta - \theta'}{\gamma}\right) d\theta dl.
    \end{aligned}
\end{equation}
Note that $\|\sigma_b\aver{u^{\eps,\delta}}\|_{L^{\infty}(\Omega)}   = \cO(\delta)$ by Lemma~\ref{LEM: UNIQUE} and the integral part is uniformly bounded by the decomposition for $w^{\eps}$ in~\eqref{EQ: W2}, therefore 
\begin{equation}\label{EQ: L2}
    \lim_{\gamma\to 0} \lim_{\eps,\delta\to 0}\int_{\bbS^{n-1}} \cL_2(x, \theta)  h\left(\frac{\theta - \theta'}{\gamma}\right) d\theta  = 0.
\end{equation}
Combining~\eqref{EQ: W},~\eqref{EQ: L1} and~\eqref{EQ: L2}, we arrive at our conclusion.
\end{proof}

\subsection{Reconstruction of $k$} 
We show next that once $\sigma_a$ is known, one can recover $k$ pointwise.

When  $n\ge 3$, we 
let $\theta, \theta'\in\bbS^{n-1}$ such that $\theta\nparallel \theta'$ and  denote $\pi_{\theta,\theta'}(x)$ the projection of $x$ onto the subspace $\Theta$ spanned by $\theta, \theta'$. Let $\theta'_{\perp}\in \Theta = \text{span}(\theta,\theta')$ be the unit vector such that $\theta'_{\perp}\cdot \theta' = 0$. Take any $\varphi \in C_0^{\infty}(-1,1)$ that $0\le \varphi\le 1$ and $\int_{\bbR}\varphi(t) dt = 1$. We then define the test function 
\begin{equation}
    \phi_{\gamma_1, \gamma_2}(x, \theta, \theta') = \frac{1}{\gamma_1} \varphi\left(\frac{x\cdot \theta'_{\perp}}{\gamma_1\theta\cdot \theta'_{\perp}}\right) h\left(\frac{x - \pi_{\theta,\theta'}(x)}{\gamma_2}\right),
\end{equation}
we also define the source function $f_{-}^{\eps, \eps',\delta}$ in the form of
\begin{equation}
    f_{-}^{\eps,\eps', \delta}(x, \theta; x' ,\theta') = \frac{\delta}{\omega_{n-1}^2\eps^{n-1}}h\left(\frac{x - x'}{\eps'}\right) h\left(\frac{\theta - \theta'}{\eps}\right)
\end{equation}
such that $f_{-}^{\eps,\eps',\delta}\in L_{1}^S(\Gamma_{-}, d\xi)$, the constant $\omega_{n-1}$ is defined by~\eqref{EQ: OMEGA}.

\begin{theorem}
     Let $n\ge3$, set $f_{-} = f_{-}^{\eps, \eps',\delta}$, and assume the tuple $(\sigma_a, \sigma_b, k, f_{-})$ is admissible. Then
    \begin{equation}\nonumber
        \begin{aligned}
            &\lim_{\gamma_1\to 0}\lim_{\gamma_2\to 0} \lim_{\eps'\to 0}  \lim_{\eps\to 0} \lim_{\delta\to 0}\int_{\partial\Omega}\frac{u^{\eps,\eps', \delta}(x + \tau_{+}(x, \theta)\theta, \theta; x',\theta')}{\eps'^{n-1}\delta} \phi_{\gamma_1, \gamma_2}(x' - x + \tau_{-}(x, \theta')\theta', \theta, \theta') d\mu(x') \\
            &=  \exp\left(-\int_{0}^{\tau^{+}(x, \theta)} \sigma_a(x + s\theta)ds\right)\exp\left(-\int_{0}^{\tau^{-}(x, \theta')} \sigma_a(x - s\theta')ds\right) k(x, \theta', \theta),
        \end{aligned}
    \end{equation}
where $u^{\eps, \eps',\delta}(x, \theta;x',\theta')$ is the unique solution to~\eqref{EQ: TP RTE} with boundary condition $f_{-}^{\eps,\eps', \delta}$. The limit holds in $L^1_{\textrm{loc}}(\Omega\times ( \bbS^{n-1}\times\bbS^{n-1}\backslash D) )$ where $D = \{(\theta, \theta')\in\bbS^{n-1}\times \bbS^{n-1}\mid \theta\nparallel \theta'\}$.
\end{theorem}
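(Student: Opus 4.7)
The strategy is first to linearize by sending $\delta\to 0$, then to apply the singular-decomposition (Born-series) method of Choulli--Stefanov~\cite{Ch-St-Osaka} to the resulting linear transport problem. Since $\|f_-^{\eps,\eps',\delta}\|_*=\mathcal{O}(\delta)$ uniformly in $\eps,\eps'$, Lemma~\ref{LEM: UNIQUE} gives $\|\aver{u^{\eps,\eps',\delta}}\|_{L^{\infty}(\Omega)}=\mathcal{O}(\delta)$, so the nonlinear contribution $\sigma_b\aver{u^{\eps,\eps',\delta}}$ is itself $\mathcal{O}(\delta)$ uniformly. Writing $u^{\eps,\eps',\delta}=\delta\,v^{\eps,\eps'}+\delta^{2}r^{\eps,\eps',\delta}$, where $v^{\eps,\eps'}$ solves the \emph{linear} problem $(T+\Sigma(0)+K)v=0$ with $v|_{\Gamma_-}=\delta^{-1}f_-^{\eps,\eps',\delta}$, a contraction argument analogous to the proof of Lemma~\ref{lemma1} yields a uniform bound on $\|r^{\eps,\eps',\delta}\|_{L^{\infty}(\Omega,L^1(\bbS^{n-1}))}$. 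The inner limit $\delta\to 0$ thus reduces the problem to the analysis of $v^{\eps,\eps'}$.

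\textbf{Born decomposition of $v^{\eps,\eps'}$.} Iterating the integral representation $v=J(0)g+H(0)v$ with $g:=\omega_{n-1}^{-2}\eps^{-(n-1)}h((x-x')/\eps')h((\theta-\theta')/\eps)$ produces
\begin{equation*}
v^{\eps,\eps'}=v_0+v_1+v_{\ge 2},\qquad v_0:=J(0)g,\quad v_1:=H(0)J(0)g,\quad v_{\ge 2}:=H(0)^2 v^{\eps,\eps'}.
\end{equation*}
By Lemma~\ref{LEM: H}, $H(0)$ is a contraction on $L^{\infty}(\Omega,L^1(\bbS^{n-1}))$ with norm $\le\mu<1$, so $v_{\ge 2}$ is uniformly bounded in that space. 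The ballistic term $v_0(x+\tau_+(x,\theta)\theta,\theta)$ is non-zero only in an $\eps$-neighborhood of $\theta=\theta'$; since $(\theta,\theta')\notin D$, this term vanishes identically for all sufficiently small $\eps$. For the multiple-scattering term $v_{\ge 2}$, the dimensional assumption $n\ge 3$ is decisive: the cutoff $h((x-\pi_{\theta,\theta'}x)/\gamma_2)$ in the test function forces localization to the $2$-dimensional plane $\Theta$, of codimension $n-2\ge 1$ in $\bbR^n$, and the $x'$-integration against this cutoff is restricted to a $\gamma_2$-neighborhood of the $1$-dimensional curve $\Theta\cap\partial\Omega$; combined with the uniform bound on $v_{\ge 2}$, the contribution is $\mathcal{O}(\gamma_2^{n-2})\to 0$ as $\gamma_2\to 0$.

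\textbf{Extraction of $k$.} The surviving term is $v_1$, given explicitly as an integral over possible scattering points $z$ in $\Omega$, weighted by $k(z,\theta',\theta)$ together with attenuated ray segments from $x'$ to $z$ along $\theta'$ and from $z$ to the exit point along $\theta$. The iterated limits act sequentially: $\eps\to 0$ concentrates the angular component of the source at $\theta'$; $\eps'\to 0$ together with the $1/\eps'^{n-1}$ normalization replaces the spatial bump by a delta at $x'\in\partial\Omega$; $\gamma_2\to 0$ restricts the $x'$-integration to $\Theta\cap\partial\Omega$; and $\gamma_1\to 0$, with the normalizing factor $|\theta\cdot\theta'_{\perp}|$ in the denominator of $\varphi$ supplying exactly the Jacobian for the change of variables from $x'$ along $\Theta\cap\partial\Omega$ to scattering point along the line $\{x-s\theta':s\ge 0\}$, localizes $x'$ to the unique point $x-\tau_-(x,\theta')\theta'$ that forces the scattering event $z=x$. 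The two attenuation integrals then collapse to the prescribed double exponential, giving $k(x,\theta',\theta)$ in the limit.

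\textbf{Main obstacle.} The delicate point is the bookkeeping of the iterated limits and, especially, the verification that $v_{\ge 2}$ makes no contribution. The argument relies on the uniform $L^{\infty}(\Omega,L^1(\bbS^{n-1}))$-bound from the contraction property in Lemma~\ref{LEM: H}, combined with the dimension-drop supplied by the $\gamma_2$-cutoff to $\Theta$; it is precisely at this step that the hypothesis $n\ge 3$ is used and cannot be relaxed with this method.
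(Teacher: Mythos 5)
Your overall architecture (ballistic / single-scattering / multiple-scattering splitting, ballistic term killed by $\theta\nparallel\theta'$, single-scattering term producing $k$ after the iterated limits) matches the paper, which works with the full nonlinear solution and lets $\delta\to 0$ turn $\Sigma(|\aver{u^{\eps,\eps',\delta}}|)$ into $\Sigma(0)$ rather than linearizing first; that difference is cosmetic. The genuine gap is in your treatment of the multiple-scattering term. You bound $v_{\ge 2}=H(0)^2v^{\eps,\eps'}$ uniformly in $L^{\infty}(\Omega,L^1(\bbS^{n-1}))$ \emph{for each fixed} $x'$, and then invoke the $\gamma_2$-cutoff to get $\mathcal{O}(\gamma_2^{n-2})$. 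But the measured quantity is $v_{\ge 2}/\eps'^{\,n-1}$ integrated in $x'$ against $\phi_{\gamma_1,\gamma_2}$, and $\int_{\partial\Omega}|\phi_{\gamma_1,\gamma_2}|\,d\mu(x')=\mathcal{O}(\gamma_2^{n-2})$ \emph{independently of} $\eps'$; so your estimate gives $\mathcal{O}(\gamma_2^{n-2}/\eps'^{\,n-1})$, which blows up in the limit $\eps'\to 0$ that is taken \emph{before} $\gamma_2\to 0$. The per-$x'$ sup bound simply cannot absorb the $\eps'^{-(n-1)}$ normalization of the spatial delta approximation.

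What is actually needed — and what the paper proves — is that $\eps'^{-(n-1)}\delta^{-1}\cL_3$ is uniformly bounded in $L^1$ jointly in all variables, including the source variables $(x',\theta')$: the factor $\eps'^{\,n-1}$ is recovered only from $\bigl\|\eps'^{-(n-1)}f_-\bigr\|_{L^1(\Gamma_-,d\xi)}=\mathcal{O}(1)$, i.e.\ from integrating over $x'$, and the price is that the resulting control is only $L^1$ in $(x,\theta)$ (via the change of variables $dz=s^{n-1}\,ds\,d\theta''$ producing the integrable kernel $s^{1-n}$). One then concludes by $\mathrm{meas}(T_{\gamma_2})\to 0$, testing against $\chi\in C_0^\infty$. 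This is also why the theorem asserts the limit only in $L^1_{\mathrm{loc}}$ rather than pointwise — a signal that your claimed pointwise-type bound is too strong. Your identification of where $n\ge 3$ enters (codimension of the plane $\Theta$, failure of $\gamma_2^{n-2}\to 0$ when $n=2$) is correct in spirit, but the estimate carrying that observation must be set up in the $L^1$ framework above for the iterated limits to close.
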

\begin{proof}
Similar to the section 3 of~\cite{Ch-St-Osaka}, we can write the solution decomposed as 
\begin{equation}\label{EQ: DECOMP}
    \begin{aligned}
        u^{\eps, \eps', \delta}(x, \theta) &= J(|\aver{u^{\eps, \eps',\delta}}|)f_{-} +  H(|\aver{u^{\eps,\eps', \delta}}|) J(|\aver{u^{\eps, \delta}}|)f_{-} \\&\quad + (I - H(|\aver{u^{\eps, \eps',\delta}}|))^{-1} H^2(|\aver{u^{\eps, \eps',\delta}}|) J(|\aver{u^{\eps, \eps',\delta}}|)f_{-} \\
        &= \cL_1(x, \theta) + \cL_2(x, \theta) + \cL_3(x, \theta),     
    \end{aligned}
\end{equation}
with the terms there corresponding to the ballistic, the single-scattering, and the  multiple-scattering components. First, it is simple to see that when $\eps$ is small enough so that $|\theta - \theta'| > \eps$, then $h\left( \frac{\theta-\theta'}{\eps} \right) = 0$; hence 
\begin{equation}\label{EQ: K L1}
    \begin{aligned}
      \int_{\partial\Omega}  & \frac{\cL_1(x + \tau_{+}(x,\theta)\theta, \theta)}{\eps'^{n-1}\delta} \phi_{\gamma_1, \gamma_2}(x' - x + \tau_{-}(x, \theta')\theta', \theta, \theta') d\mu(x')     \\
         &= \int_{\partial\Omega} \frac{1}{\omega_{n-1}^2\eps'^{n-1}\eps^{n-1}} h\left(\frac{x - \tau_{-}(x, \theta)\theta - x'}{\eps}\right) h\left( \frac{\theta-\theta'}{\eps} \right) \\&\quad \times\exp\left(-\int_0^{\tau(x, \theta)} \Sigma(|\aver{u^{\eps,\delta}}|)(x - s\theta, \theta) ds\right)\phi_{\gamma_1, \gamma_2}(x' - x + \tau_{-}(x, \theta')\theta', \theta, \theta') d\mu(x')\\
         &=  0.
    \end{aligned}
\end{equation}
Next, we compute the contribution of the single-scattering term. Let $E(x, y, m)$ denote
\[        E(x, y, m) = \exp\left(|x- y|\int_{0}^1 \Sigma(m)(x + s(y-x))ds\right). \]
In order to make the derivation concise, we also introduce the following notation
\begin{equation}
    \begin{aligned}
        x_{\pm,\theta} &= x \pm \tau_{\pm}(x, \theta)\theta, \\
        y_{l,\theta} &= x_{+,\theta} - l\theta,\\
        z_{l,\theta, \theta''} & = y_{l,\theta}- \tau_{-}(y_l, \theta'')\theta''.
    \end{aligned}
\end{equation}
\begingroup
\allowdisplaybreaks
Then we could write
    \begin{align*}
        &\lim_{\eps'\to 0}\lim_{\eps\to 0}\lim_{\delta\to 0}\int_{\partial\Omega} \frac{\cL_2(x + \tau_{+}(x,\theta)\theta, \theta)}{\eps'^{n-1}\delta} \phi_{\gamma_1, \gamma_2}(x' - x + \tau_{-}(x, \theta')\theta', \theta, \theta') d\mu (x') \\ 
        =& \lim_{\eps'\to 0}\lim_{\eps\to 0}\lim_{\delta\to 0}\int_{\partial\Omega} \int_{0}^{\tau(x, \theta)} \int_{\bbS^{n-1}} E(x_{+,\theta}, y_{l,\theta}, |\aver{u^{\eps, \eps',\delta}}| )  E( y_{l,\theta},z_{l,\theta,\theta''}, |\aver{u^{\eps,\eps',\delta}}| )\times \\ &\quad\frac{1}{\omega_{n-1}\eps'^{n-1}}  h\left(\frac{z_{\theta''} - x'}{\eps'}\right) \frac{1}{\omega_{n-1}\eps^{n-1}}h\left(\frac{\theta'' - \theta'}{\eps}\right) k(y_{l,\theta}, \theta'', \theta) \times \\
        &\quad \phi_{\gamma_1, \gamma_2}(x' - x + \tau_{-}(x, \theta')\theta', \theta, \theta')  d\theta'' dld\mu (x') \\
        =& \lim_{\eps'\to 0}\lim_{\eps\to 0}\int_{\partial\Omega} \int_{0}^{\tau(x, \theta)} \int_{\bbS^{n-1}}  E(x_{+,\theta}, y_{l,\theta}, 0 ) E( y_{l,\theta},z_{l,\theta,\theta''}, 0 )\times \\ &\quad\frac{1}{\omega_{n-1}\eps'^{n-1}}h\left(\frac{z_{\theta''} - x'}{\eps'}\right) \frac{1}{\omega_{n-1}\eps^{n-1}}h\left(\frac{\theta'' - \theta'}{\eps}\right) k(y_{l,\theta}, \theta'', \theta)  \times \\
        &\quad \phi_{\gamma_1, \gamma_2}(x' - x + \tau_{-}(x, \theta')\theta', \theta, \theta')d\theta''dl d\mu (x') \\
        =& \lim_{\eps'\to 0}\int_{\partial\Omega} \int_{0}^{\tau(x, \theta)} E(x_{+,\theta}, y_{l,\theta}, 0 ) E( y_{l,\theta},z_{l,\theta,\theta'}, 0 )\times \\ &\quad\frac{1}{\omega_{n-1}\eps'^{n-1}}h\left(\frac{z_{\theta'} - x'}{\eps'}\right)k(y_{l,\theta}, \theta', \theta)\times \\
        &\quad \phi_{\gamma_1, \gamma_2}(x' - x + \tau_{-}(x, \theta')\theta', \theta, \theta') dl  d\mu (x') \\
        =& \int_{0}^{\tau(x, \theta)}E(x_{+,\theta}, y_{l,\theta}, 0 ) E( y_{l,\theta},z_{l,\theta,\theta'}, 0 ) k(y_{l,\theta}, \theta', \theta) \times \\&\quad \phi_{\gamma_1, \gamma_2}(y_{l,\theta} - x + \tau_{-}(x, \theta')\theta', \theta, \theta')dl.
    \end{align*}
\endgroup
The right-hand-side has the limit 
\begin{equation}
    \begin{aligned}
        \lim_{\gamma_1\to 0}\lim_{\gamma_2\to 0} &\int_{0}^{\tau(x, \theta)}E(x_{+,\theta}, y_{l,\theta}, 0 ) E( y_{l,\theta},z_{l,\theta,\theta'}, 0 ) k(y_{l,\theta}, \theta', \theta) \\&\quad \times \phi_{\gamma_1, \gamma_2}(y_{l,\theta} - x + \tau_{-}(x, \theta')\theta', \theta, \theta')dl \\  
        &= \lim_{\gamma_1\to 0}\int_{0}^{\tau(x, \theta)}E(x_{+,\theta}, y_{l,\theta}, 0 ) E( y_{l,\theta},z_{l,\theta,\theta'}, 0 ) k(y_{l,\theta}, \theta', \theta)\frac{1}{\gamma_1} \varphi\left(\frac{\tau^{+}(x, \theta) - l}{\gamma_1}\right) dl \\
        & = E(x_{+,\theta}, x, 0 ) E( x,x_{-,\theta'}, 0 ) k(x, \theta', \theta).
    \end{aligned}
\end{equation}
To show that the multi-scattering contribution is zero, we only need to show that $\frac{1}{(\eps')^{n-1}\delta}\cL_3(x, \theta)\in L^1(\Omega\times\bbS^{n-1})$ uniformly, hence uniform bounded in $L^1(\Gamma_{\pm}, d\xi)$. 
Given any $\chi\in C_0^{\infty}(\Omega \times ( \bbS^{d-1}\times \bbS^{d-1} \backslash D))$, we have
\begin{equation}\label{EQ: MULTI}
  \begin{aligned}
    &\int_{\Omega\times\bbS^{n-1}\times \Gamma_{-}}\frac{\cL_3(x + \tau_{+}(x, \theta), \theta; x',\theta')}{\eps'^{n-1}\delta} \phi_{\gamma_1,\gamma_2}(x'-x+\tau_{-}(x, \theta')\theta', \theta, \theta')  \chi  d\xi(x', \theta') d\mu(x) d\theta \\
    & \le \frac{1}{\gamma_1}\int_{T_{\gamma_2}} \frac{\cL_3(x + \tau_{+}(x, \theta), \theta; x',\theta')}{\eps'^{n-1}\delta}  \chi d\mu(x) d\theta d\xi(x', \theta'),
  \end{aligned}
\end{equation}
where $T_{\gamma_2} = \{(x, \theta, x' ,\theta')\in \Omega\times\bbS^{n-1}\times \Gamma_{-}\cap \supp\chi\text{ and } |x - x' - \pi_{\theta, \theta'}(x-x')|\le c \gamma_2\}$. When  $\frac{1}{(\eps')^{n-1}\delta}\cL_3(x, \theta)$ is uniformly bounded in $L^1(\Gamma_{+}, d\xi)$, the integrand of~\eqref{EQ: MULTI} is an $L^1$ function. On the other hand, $\textrm{meas}(T_{\gamma_2})\to 0$ as $\gamma_2\to 0$, therefore the integral vanishes as $\gamma_2\to 0$. In the following, we  prove $\frac{1}{(\eps')^{n-1}\delta}\cL_3(x, \theta)\in L^1(\Omega\times\bbS^{n-1})$ with a uniform bound there with respect to $\eps'\ll1$ and $\delta\ll1$.

Since $(I - H(|\aver{u^{\eps,\eps',\delta}}|))^{-1}$ is a uniformly bounded operator in $L^1(\Omega\times\bbS^{n-1})$, we merely  have to show that $ \frac{1}{(\eps')^{n-1}\delta}H^2(|\aver{u^{\eps,\eps',\delta}}|)J(|\aver{u^{\eps, \eps', \delta}}|) f_{-}$ is also uniformly bounded, see \eqref{EQ: DECOMP}. 
Let $y_{l,\theta} = x - l\theta$, $z_{s, \theta''} = y_{l,\theta} - s\theta''$, $w_{\theta'''} = z_{s,\theta''} - \tau_{-}(z_{s,\theta''},\theta''')\theta'''$.  Then 
\begin{equation}
    \begin{aligned}
        & \left|\frac{H^2(|\aver{u^{\eps, \eps',\delta}}|) J(|\aver{u^{\eps, \eps',\delta}}|)f_{-}(x, \theta)}{\eps'^{n-1}\delta}\right| \\
        &\le \int_{0}^{\tau_{-}(x,\theta)} \int_{\bbS^{n-1}}\int_{0}^{\tau_{-}(y_{l,\theta},\theta'')}\int_{\bbS^{n-1}} E(x, y_{l,\theta}, |\aver{u^{\eps, \eps',\delta}}|) E(y_{l,\theta}, z_{s,\theta''}, |\aver{u^{\eps, \eps',\delta}}|) \times \\&\quad E(z_{s,\theta''}, w_{\theta'''}, |\aver{u^{\eps, \eps',\delta}}|) k(y_{l,\theta}, \theta'',\theta) k(z_{s,\theta''}, \theta''', \theta'') |f_{-}(w_{\theta'''}, \theta''')| d\theta''' ds d\theta'' dl \\
        &\le \int_{0}^{\tau_{-}(x,\theta)} \int_{\bbS^{n-1}}\int_{0}^{\tau_{-}(y_{l,\theta},\theta'')}\int_{\bbS^{n-1}}  k(y_{l,\theta}, \theta'',\theta) k(z_{s,\theta''}, \theta''', \theta'') |f_{-}(w_{\theta'''}, \theta''')| d\theta''' ds d\theta'' dl .
    \end{aligned}
\end{equation}
Since $z_{s,\theta''} = y_{l,\theta} - s\theta'' $, we change the variable that $dz_{s, \theta''} = s^{n-1} ds d\theta''$, and recall the formula  
\begin{equation}
    \int_{\Omega\times \bbS^{n-1}} g(x, \theta) dx d\theta = \int_{\Gamma_{-}} \int_0^{\tau_+(x', \theta)} g(x' + t \theta, \theta) dt d\xi(x', \theta),
\end{equation}
see \cite{Ch-St-Osaka}, with $x' = x - \tau_{-}(x,\theta)\theta$. We obtain 
\begin{equation}
    \begin{aligned}
        &\int_{0}^{\tau_{-}(x,\theta)} \int_{\bbS^{n-1}}\int_{0}^{\tau_{-}(y_{l,\theta},\theta'')}\int_{\bbS^{n-1}}  k(y_{l,\theta}, \theta'',\theta) k(z_{s,\theta''}, \theta''', \theta'') |f_{-}(w_{\theta'''}, \theta''')| d\theta''' ds d\theta'' dl\\
        &=  \int_{0}^{\tau_{-}(x,\theta)}\int_{\Gamma_{-}}\int_{0}^{\tau_{+}(w_{\theta'''}, \theta''')} k(y_{l,\theta}, \theta'',\theta) k(w_{\theta'''} + t\theta''', \theta''', \theta'')\times \\&\quad \frac{1}{\eps'^{n-1}} |f_{-}(w_{\theta'''}, \theta''')|  s^{1-n}  dt d\xi(w_{\theta'''},\theta''') dl \\
        &\le C \left\|\frac{1}{\eps'^{n-1}}f_{-}\right\|_{L^1(\Gamma_{-}, d\xi)}\int_{0}^{\tau_{-}(x,\theta)} \int_{0}^{\tau_{+}(x', \theta')}    s^{1-n}dt  dl\in L^1(\Omega\times\bbS^{n-1}),
    \end{aligned}
\end{equation}
which is uniformly bounded in $L^1(\Omega\times\bbS^{n-1})$ with respect to $\eps'$, where $s = |y_{l,\theta} - (x' + t\theta')|$ and $C =\|k\|^2_{L^{\infty}(\Omega\times\bbS^{n-1}\times\bbS^{n-1})} $. 
\end{proof}
\subsection{Reconstruction of $\sigma_b$}
Let the source function $f_{-}$ be chosen in the following form
\begin{equation}
  f_{-}^{\eps, \delta} (x, \theta; \theta')= c_0 + \frac{\delta}{\omega_{n-1}\eps^{n-1}} h \left(\frac{\theta - \theta'}{\eps}\right),
\end{equation} 
where $c_0$ is a positive constant and $\delta, \eps$ are positive small  parameters.
Compared with \eqref{EQ:f_-}, here we have added $f_0=c_0$ in \eqref{EQ:f}. 

\begin{theorem}\label{THM: B}
  Let $f_{-} = f_{-}^{\eps, \delta}$ and assume the tuple $(\sigma_a, \sigma_b, k, f_{-})$ is admissible, then 
  \begin{equation}
    \lim_{\delta\to 0}\lim_{\gamma\to 0}\lim_{\eps\to 0}\int_{\bbS^{n-1}}\frac{u^{\eps, \delta}(x, \theta)}{\delta} h\left(\frac{\theta - \theta'}{\gamma}\right)d\theta = \exp \left( \int_0^{\tau_{-}(x,{\theta'})} \Sigma(|\aver{w}|)(x-s{\theta'})ds\right),
  \end{equation}
  where $u^{\eps,\delta}$ is the unique solution to~\eqref{EQ: TP RTE} with boundary condition $f_{-}$ and $w$ is the unique solution to~\eqref{EQ: TP RTE} with the boundary condition $f_{-} = c_0$.
\end{theorem}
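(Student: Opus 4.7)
The plan is to linearize the nonlinear transport equation around the background solution $w$ and then isolate the ballistic (unscattered) component of the linear correction along the direction $\theta'$, reusing the singular-data strategy of the previous two theorems. By Lemma~\ref{LEM: UNIQUE} both $w$ and $u^{\eps,\delta}$ are well defined and non-negative, and $w\ge c_0 C>0$. I would introduce the difference $\tilde v^{\eps,\delta}:=(u^{\eps,\delta}-w)/\delta$, so that $\tilde v^{\eps,\delta}|_{\Gamma_-}=f_1^{\eps}(x,\theta):=\frac{1}{\omega_{n-1}\eps^{n-1}}h((\theta-\theta')/\eps)$, with $\|f_1^{\eps}\|_*$ bounded independently of $\eps$ by \eqref{EQ: OMEGA}. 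Subtracting the two instances of \eqref{EQ: TP RTE} and using
\[
\aver{u^{\eps,\delta}}u^{\eps,\delta}-\aver{w}w \;=\; \aver{w}(u^{\eps,\delta}-w)+\aver{u^{\eps,\delta}-w}\,u^{\eps,\delta},
\]
which is exact (absolute values in \eqref{EQ: TP RTE} are harmless since both solutions are non-negative), one obtains the closed linear equation
\[
(T+\Sigma(\aver{w})+K)\tilde v^{\eps,\delta} \;=\; \sigma_b\aver{\tilde v^{\eps,\delta}}\,u^{\eps,\delta},\qquad \tilde v^{\eps,\delta}\big|_{\Gamma_-}=f_1^{\eps},
\]
whose Duhamel form reads $\tilde v^{\eps,\delta}=J(\aver{w})f_1^{\eps}+H(\aver{w})\tilde v^{\eps,\delta}+R^{\eps,\delta}$, where $R^{\eps,\delta}$ encodes the $\sigma_b\aver{\tilde v^{\eps,\delta}}u^{\eps,\delta}$ source.

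Next I would re-run the fixed-point argument of Lemma~\ref{lemma1} on this linear integral equation to conclude that $\|\aver{\tilde v^{\eps,\delta}}\|_{L^\infty(\Omega)}$ is bounded uniformly in $\eps,\delta$. The operator $H(\aver{w})$ is a contraction on $L^\infty(\Omega,L^1(\bbS^{n-1}))$ by Lemma~\ref{LEM: H}; the ballistic part $J(\aver{w})f_1^{\eps}$ is controlled by $\|f_1^{\eps}\|_*$; and $R^{\eps,\delta}$ is absorbed using admissibility together with the a~priori $L^\infty(\Omega\times\bbS^{n-1})$-bound on $u^{\eps,\delta}$ (which in turn follows from iteration because its boundary data are bounded), together with Corollary~\ref{COR: cor1}. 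The same reasoning shows $w\in L^\infty(\Omega\times\bbS^{n-1})$.

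The third step is to test the integral representation against $h((\theta-\theta')/\gamma)$, integrate over $\bbS^{n-1}$, and analyze each piece. Because $f_1^{\eps}$ is independent of $x$,
\[
J(\aver{w})f_1^{\eps}(x,\theta)=\frac{1}{\omega_{n-1}\eps^{n-1}}h\!\left(\tfrac{\theta-\theta'}{\eps}\right)\exp\!\left(\int_0^{\tau_-(x,\theta)}\Sigma(\aver{w})(x-s\theta)\,ds\right),
\]
so the mollifier identity \eqref{EQ: OMEGA} yields
\[
\lim_{\eps\to 0}\int_{\bbS^{n-1}} J(\aver{w})f_1^{\eps}\,h\!\left(\tfrac{\theta-\theta'}{\gamma}\right)d\theta \;=\; h(0)\exp\!\left(\int_0^{\tau_-(x,\theta')}\Sigma(\aver{w})(x-s\theta')\,ds\right),
\]
which is the target expression since $h(0)=1$ and is independent of $\gamma$. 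By the uniform bound of Step~2 and Lemma~\ref{LEM: H}, the contributions $H(\aver{w})\tilde v^{\eps,\delta}$ and $R^{\eps,\delta}$ are $L^\infty$-bounded, so their integrals against $h((\theta-\theta')/\gamma)$ are $O(\gamma^{n-1})\to 0$ as $\gamma\to 0$. The residual piece $\delta^{-1}\int w\,h((\theta-\theta')/\gamma)\,d\theta$ coming from $u^{\eps,\delta}/\delta = w/\delta+\tilde v^{\eps,\delta}$ is bounded by $\|w\|_{L^\infty}\gamma^{n-1}/\delta$, which vanishes in the $\gamma\to 0$ limit for each fixed $\delta>0$; this is precisely why $\delta$ is sent to zero last. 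Combining yields the theorem.

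The main obstacle will be the uniform control of $\tilde v^{\eps,\delta}$ as $\eps\to 0$: the boundary data $f_1^{\eps}$ blow up in $L^\infty$ like $\eps^{1-n}$ while remaining bounded in the $\|\cdot\|_*$ norm, which forces the entire uniform-bound step to be performed in $L^\infty(\Omega,L^1(\bbS^{n-1}))$ exactly as in Lemma~\ref{lemma1} and Corollary~\ref{COR: cor1}. The strict positivity $w\ge c_0 C>0$ is essential for making the bilinear identity above \emph{exact} (so that $\tilde v^{\eps,\delta}$ solves a closed linear equation for every $\delta$, not just to leading order in $\delta$); once the uniform bounds are in place, the passage to the iterated limits is direct.
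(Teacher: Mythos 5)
Your overall strategy (linearize around the background solution $w$, isolate the ballistic part of the $O(\delta)$ correction, and kill the scattering and remainder contributions via the mollifier limits) is the same as the paper's, but your choice of how to split the bilinear term creates a genuine gap in the final step. You write the closed equation as $(T+\Sigma(\aver w)+K)\tilde v^{\eps,\delta}=\sigma_b\aver{\tilde v^{\eps,\delta}}\,u^{\eps,\delta}$ and then claim that the Duhamel remainder $R^{\eps,\delta}$ is uniformly bounded in $L^\infty(\Omega\times\bbS^{n-1})$, invoking ``the a priori $L^\infty$-bound on $u^{\eps,\delta}$, which follows because its boundary data are bounded.'' This is false: the boundary datum is $c_0+\delta\,\omega_{n-1}^{-1}\eps^{1-n}h((\theta-\theta')/\eps)$, whose sup norm blows up like $\delta\eps^{1-n}$, and the ballistic part of $u^{\eps,\delta}$ inherits this blow-up, concentrated precisely at $\theta=\theta'$. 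Consequently $R^{\eps,\delta}$ is not uniformly $L^\infty$-bounded, and its pairing with $h((\theta-\theta')/\gamma)$ is not $O(\gamma^{n-1})$: after $\eps\to 0$ the singular part of $u^{\eps,\delta}$ sitting inside the source produces a contribution of size $O(\delta)$ located exactly at $\theta=\theta'$, which survives the $\gamma\to 0$ limit and dies only in the outermost $\delta\to 0$ limit. The theorem is still reachable along your route, but the reason you give for discarding $R^{\eps,\delta}$ is wrong, and you explicitly (and incorrectly) attribute the need to send $\delta\to 0$ last solely to the $w/\delta$ residual.

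The paper avoids this by splitting the bilinear term the other way: it keeps $\Sigma(|\aver{u^{\eps,\delta}}|)$ in the transport operator and leaves $\sigma_b\,\delta^{-1}\bigl(|\aver{u^{\eps,\delta}}|-|\aver w|\bigr)\,w$ as the source. Since $w\in L^\infty(\Omega\times\bbS^{n-1})$ and $\delta^{-1}\bigl||\aver{u^{\eps,\delta}}|-|\aver w|\bigr|\le|\aver{\phi}|$ is uniformly bounded, that remainder really is uniformly $L^\infty$-bounded and is annihilated by $\gamma\to 0$; the price is that the ballistic exponential then carries $\Sigma(|\aver{u^{\eps,\delta}}|)$ rather than $\Sigma(|\aver w|)$, and it is exactly the outermost $\delta\to 0$ limit that converts it to $\Sigma(|\aver w|)$. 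If you insist on your decomposition, you must split $u^{\eps,\delta}$ in the source into its singular ballistic part (estimated separately, yielding the $O(\delta)$ term above) and a uniformly bounded remainder. A minor additional point: the identity $\aver u\,u-\aver w\,w=\aver w(u-w)+\aver{u-w}\,u$ is exact for purely algebraic reasons; positivity of the solutions is needed only to remove the absolute values inside $\Sigma$, not to make the identity exact.
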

\begin{proof}
  Let $w(x, \theta)$ be the solution to the following equation
  \begin{equation}\label{EQ: W'}
    \begin{aligned}
      ( T + \Sigma(|\aver{w}|) + K ) w &= 0 &\text{ in } &\Omega \times \mathbb{S}^{n-1}, \\
      w(x, \theta) &= c_0 \, &\text{ on }& \Gamma_{-}.
    \end{aligned}
  \end{equation}
  Then $w\in L^{\infty}(\Omega\times\bbS^{n-1})$, which implies 
  \begin{equation}\label{EQ: PART I}
    \lim_{\delta\to 0}\lim_{\gamma\to 0}\lim_{\eps\to 0}\int_{\bbS^{n-1}} \frac{w(x, \theta)}{\delta} h\left(\frac{\theta - \theta'}{\gamma}\right) d\theta = 0.
  \end{equation}
  We  denote  $\phi = \frac{1}{\delta}\left( u^{\eps,\delta}-  w\right)$. It satisfies 
\begin{equation}
  \begin{aligned}
    (T + \Sigma(|\aver{u^{\eps,\delta}}|) + K) \phi &= \sigma_b \left(\frac{|\aver{u^{\eps,\delta}}| - |\aver{w}|}{\delta}\right)w &\text{ in } &\Omega \times \mathbb{S}^{n-1}, \\
    \phi(x, \theta) &=\frac{1}{\omega_{n-1}\eps^{n-1}} h \left(\frac{\theta - \theta'}{\eps}\right) \, &\text{ on }& \Gamma_{-}.
  \end{aligned}
\end{equation}
Therefore, the solution $\phi$ can be written in the following form,
\begin{equation}\nonumber
  \begin{aligned}
    \phi(x, \theta) &=   \exp \left( \int_0^{\tau_{-}(x,\theta)} \Sigma(|\aver{u^{\eps, \delta}}|)(x-s\theta)ds\right) \frac{1}{\omega_{n-1}\eps^{n-1}} h \left(\frac{\theta - \theta'}{\eps}\right) \\
     &\quad + \int_{0}^{\tau_{-}(x, \theta)} \exp \left( \int_0^l \Sigma(|\aver{u^{\eps, \delta}}|)(x-s\theta)ds\right) K\phi(x- l\theta, \theta) dl \\
    &\quad -\int_{0}^{\tau_{-}(x, \theta)} \exp \left( \int_0^l \Sigma(|\aver{u^{\eps, \delta}}|)(x-s\theta)ds\right)\left[\sigma_b \left(\frac{|\aver{u^{\eps,\delta}}| - |\aver{w}|}{\delta}\right)w(x-l\theta, \theta)\right]  dl\\
    &=\cL_1(x, \theta) + \cL_2(x, \theta) + \cL_3(x, \theta).
  \end{aligned}
\end{equation}
Integrate $\phi(x, \theta)$ over $\bbS^{n-1}$ and note $||\aver{u^{\eps,\delta}}| - |\aver{w}||\le \delta |\aver{\phi}|$, to obtain 
\begin{equation}
  \|\aver{\phi}\|_{L^{\infty}(\Omega)} \le \frac{1}{(1-\mu)(1- \nu)}\left|\int_{\bbS^{n-1}} \frac{1}{\omega_{n-1}\eps^{n-1}} h \left(\frac{\theta - \theta'}{\eps}\right) d\theta \right|.
\end{equation}
This implies that $\cL_2, \cL_3$ are both uniformly bounded in $L^{\infty}(\Omega\times\bbS^{n-1})$, hence 
\begin{equation}\label{EQ: PART II}
  \begin{aligned}
&\lim_{\delta\to 0}\lim_{\gamma\to 0}\lim_{\eps\to 0}\int_{\bbS^{n-1}} \phi(x, \theta) h\left(\frac{\theta - \theta'}{\gamma}\right)d\theta \\=& \lim_{\delta\to 0}\lim_{\gamma\to 0}\lim_{\eps\to 0}\int_{\bbS^{n-1}}\cL_1(x, \theta)\frac{1}{\omega_{n-1}\eps^{n-1}} h \left(\frac{\theta - \theta'}{\eps}\right) h\left(\frac{\theta - \theta'}{\gamma}\right)d\theta \\
=&\lim_{\delta\to 0} \exp \left( \int_0^{\tau_{-}(x,{\theta'})} \Sigma(|\aver{u^{\eps, {\theta'}}}|)(x-s{\theta'})ds\right) \\=& \exp \left( \int_0^{\tau_{-}(x,{\theta'})} \Sigma(|\aver{w}|)(x-s{\theta'})ds\right).
  \end{aligned}
\end{equation}

Combine this with~\eqref{EQ: PART I} to obtain  
\begin{equation}
  \lim_{\delta\to 0}\lim_{\gamma\to 0}\lim_{\eps\to 0}\int_{\bbS^{n-1}}\frac{u^{\eps, \delta}(x, \theta)}{\delta} h\left(\frac{\theta - \theta'}{\gamma}\right)d\theta = \exp \left( \int_0^{\tau_{-}(x,{\theta'})} \Sigma(|\aver{w}|)(x-s{\theta'})ds\right).
\end{equation}
\end{proof}

Theorem~\ref{THM: B} implies that $\Sigma(|w|)$ can be reconstructed from the albedo operator. Therefore the solution $w$ of~\eqref{EQ: W'} can be uniquely determined {and there exists a constant $C > 0$ such that $w(x, \theta)\ge C c_0$ by Lemma~\ref{LEM: UNIQUE}}; and when $\sigma_a$ is  known, one can find $\sigma_b = (\Sigma(|w|) - \sigma_a )/|\aver{w}|$.  

\section{Scattering-free media}\label{SEC: SCATTER FREE}
For  media with $k = 0$, there exists a more direct explicit reconstruction method. Moreover, no smallness assumptions on the boundary source are needed. Equation \eqref{EQ: TP RTE} reduces to 
 \begin{equation}\label{transport_eq}
 \theta\cdot\nabla u +\sigma_a  u +\sigma_b \langle u \rangle u =0.
 \end{equation} 
Choose the boundary condition
 \begin{equation}\label{2}
 f_-= v_- (x)\delta_{\theta_0}(\theta)
 \end{equation}
in \eqref{EQ: TP RTE}  with some  $v_-(x)\ge0$ in $C^1$.  We are going to look for a non-negative weak solution, i.e., for a solution of the integrated equation
\begin{equation}\label{EQ:P1}
    u(x, \theta) = v_{-}(x - \tau_{-}(x, \theta)\theta) \delta_{\theta_0}(\theta) \exp\left( -\int_0^{\tau_{-}(x,\theta)} (\sigma_a + \sigma_b \aver u)(x-s\theta)ds \right) 
\end{equation}
in the following class: $u(x,\theta)$ is a measure-valued function in $\theta$,    $C^1(\Omega)\cap C(\bar\Omega)$ in the $x$ variable. Then $\aver u (x)$ is in the latter space.   By \eqref{EQ:P1}, $u = \delta_{\theta_0}(\theta)v$ with $v\in C(\bar\Omega\times \bbS^{n-1})$; and also, $v$ is $C^1$ except for $(x,\theta)$ such that $x\in\partial\Omega$ and $\theta$ is tangent to $\partial\Omega$ (which is $\partial\Gamma_0$). Clearly, only the value of $v$ at $\theta=\theta_0$ matters for $u$. With some abuse of notation, we denote $v(x,\theta_0)$ by $v(x)$. Then by \eqref{2}, $v$ must satisfy the boundary condition $v=v_-$ on $\partial\Omega$.

 
 In view of the $C^1$ regularity of $v$ as stated above, we can differentiate \eqref{EQ:P1} to get back to the differential form \eqref{transport_eq}, which in this case reduces to 
 \begin{equation}\label{5}
 \left(\theta_0\cdot\nabla v+ \sigma_a v+\sigma_b v^2\right) =0, 
 \end{equation}
 since $\langle u\rangle=v$. 
 Here, $\sigma_a$ and $\sigma_b$ can depend on $\theta$ as well; then $\theta=\theta_0$ above.
Therefore, on each line $s\mapsto (x_0+s\theta_0,\theta_0)$, the equation reduces to
 \begin{equation}\label{6}
 v'+\sigma_a v+\sigma_bv^2=0.
 \end{equation}
 This is a homogeneous Riccati equation. For each initial condition $v(0)=v_-(x_0)$, we measure $v(\tau_+(x,\theta_0))$. 

 Let $\mu(t)=\exp\big(-\int_0^t\sigma_a(s)\,d s\big)$; then $1/\mu$ is the integrating factor. Multiply \eqref{6} by $1/\mu$ to get
 \begin{equation}\label{6a}
 (v/\mu)'+\sigma_b  v^2/\mu=0.
 \end{equation}
 This is a separable ODE for $v/\mu$ and the solution satisfies
 \begin{equation}\label{6b}
 \frac{\mu}{ v} = \frac1{v_-(x_0)}+\int_0^s \mu(t) \sigma_b(t) \,d t,
 \end{equation}
 therefore,
 \begin{equation}\label{6c}
 v(s) = \mu(s)\left(\frac1{v_-(x_0)}+ \int_0^s \mu(t) \sigma_b(t) \,d t\right)^{-1}.
 \end{equation}
 Hence, at $s=\tau_+(x_0,\theta_0)$ we recover the attenuated X-ray transform of $\sigma_b$ with attenuation $\sigma_a$, assuming $\sigma_a$ known. One way to recover $\sigma_a$ is to replace  $v_-(x_0)$ by $\delta v_-(x_0)$ as in the previous section with $\delta\to0$, then we get the X-ray transform $-\log \mu(\tau_{+}(x,\theta_0))$ of $\sigma_a$; and by varying $\theta$, we can recover $\sigma_a$. Then we recover $\sigma_b$ by inverting the attenuated X-ray transform of $\sigma_b$, see \cite{Buk-Kaz, Novikov}.  

If we do not want to deal with small signals which may be corrupted by background noise too much, we can proceed as following. To reconstruct $\sigma_a$,  we choose two distinct boundary sources $f_{-,j} = v_{-,j}(x)\delta_{\theta_0}(\theta)$, $j=1,2$ such that $\forall x\in\partial\Omega$, $v_{-,1}(x) >  v_{-,2}(x)$. Let $v_1, v_2$ be the solutions to~\eqref{6} with $v_j(0) = v_{-,j}(x_0)$, then from~\eqref{6c} we observe
\begin{equation}
  \frac{1}{v_j(s)} = \frac{1}{\mu(s)} \left(\frac1{v_{-,j}(x_0)}+ \int_0^s \mu(t) \sigma_b(t) \,d t\right),\quad j = 1,2.
\end{equation}
Subtracting the above formulas with $j=1, 2$, we obtain
\begin{equation}
  \frac{1}{v_1(s)} - \frac{1}{v_2(s)} = \frac{1}{\mu(s)} \left( \frac1{v_{-,1}(x_0)} - \frac1{v_{-,2}(x_0)}  \right),
\end{equation} 
which implies 
\begin{equation}\label{78}
  \mu(s) = \left(   \frac{1}{v_1(s)} - \frac{1}{v_2(s)}    \right)^{-1}  \left( \frac1{v_{-,1}(x_0)} - \frac1{v_{-,2}(x_0)}  \right).
\end{equation}
Take $s=\tau_+(x_0,\theta_0)$ to get $\mu(\tau_+(x_0,\theta_0)) = \exp(-X\sigma_a(x_0, \theta_0))$, where $X$ is the X-ray transform, can be determined by \eqref{78}.  Therefore,  we can recover $\sigma_a$ first by  varying $\theta_0$ and inverting the X-ray transform of $\sigma_a$ as above. After that, we  recover $\sigma_b$ as above. 

Also, one can take $v_-(x_0)$ approximating $\delta_{x_0}(x)$, this corresponds to a single beam. 

Therefore, we proved the following.

\begin{theorem}\label{thm_sc-free}
Assume $k=0$. Let $\sigma_a$ and $\sigma_b$ depend on $x$ only and be in  $C^0(\overline{\Omega})$.  Then $\mathcal{A}$ acting on $f_-$ as in \eqref{2},  determines $\sigma_a$, $\sigma_b$  uniquely by inverting their attenuated, respectively the non-attenuated X-ray transforms, which can be determined by \eqref{6c} and \eqref{78}.  
\end{theorem}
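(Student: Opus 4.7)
The plan is to exploit the collimated structure of the boundary source in \eqref{2} to reduce the transport equation to a one-dimensional Riccati ODE along each line $s\mapsto x_0+s\theta_0$, solve it explicitly, and then extract $\sigma_a$ and $\sigma_b$ one after the other from boundary measurements without any smallness assumption.

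First, I would make rigorous the ansatz $u(x,\theta)=v(x)\delta_{\theta_0}(\theta)$ by seeking a non-negative weak solution to \eqref{EQ:P1} in the class of measure-valued functions in $\theta$ that are $C^1$ in $x$ (away from the tangential boundary set). Since $\langle u\rangle = v$, substituting into \eqref{EQ:P1} shows that $v$ is determined on $\partial\Omega$ by $v_-$ and satisfies the scalar Riccati equation \eqref{6} along each ray. Standard ODE theory for Riccati equations with non-negative coefficients $\sigma_a,\sigma_b\in C^0(\overline\Omega)$ and non-negative initial data yields a unique non-negative $C^1$ solution up to the exit time, with the closed-form expression \eqref{6c} in terms of the integrating factor $\mu(t)=\exp(-\int_0^t\sigma_a)$.

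Next, I recover $\sigma_a$ without invoking a small-source limit, using two collimated sources $f_{-,j}=v_{-,j}(x)\delta_{\theta_0}(\theta)$ with $v_{-,1}>v_{-,2}>0$ on $\partial\Omega$. The albedo operator delivers the corresponding exit values $v_j(\tau_+(x_0,\theta_0))$, and subtracting the two reciprocal forms of \eqref{6c} cancels the $\sigma_b$-dependent integral, yielding the explicit identity \eqref{78} for $\mu(\tau_+(x_0,\theta_0))=\exp(-X\sigma_a(x_0,\theta_0))$, where $X$ denotes the X-ray transform along the line through $x_0$ in direction $\theta_0$. Varying $(x_0,\theta_0)$ over all lines intersecting $\Omega$ gives the full X-ray transform of $\sigma_a$, and inversion (classical for $\sigma_a=\sigma_a(x)\in C^0(\overline\Omega)$, cf.\ \cite{Novikov}) reconstructs $\sigma_a$.

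Once $\sigma_a$ is known, $\mu$ is known everywhere, and formula \eqref{6c} with any single source $v_-$ rearranges to
\begin{equation}
\int_0^{\tau_+(x_0,\theta_0)} \mu(t)\sigma_b(t)\,dt = \frac{\mu(\tau_+(x_0,\theta_0))}{v(\tau_+(x_0,\theta_0))} - \frac{1}{v_-(x_0)},
\end{equation}
whose left-hand side is precisely the attenuated X-ray transform of $\sigma_b$ with weight $\mu$. Inverting the attenuated X-ray transform, as in \cite{Buk-Kaz,Novikov}, recovers $\sigma_b$. The only subtle point in the argument is the justification of the Dirac-in-angle ansatz as the unique weak solution in the measure-valued class; this is what requires $v_-\in C^1$ and the $C^0$ regularity of $\sigma_a,\sigma_b$, but once admitted, the rest is an explicit ODE computation followed by two X-ray inversions.
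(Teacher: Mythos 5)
Your proposal is correct and follows essentially the same route as the paper: the Dirac-in-angle ansatz reducing \eqref{EQ: TP RTE} to the Riccati equation \eqref{6} along rays, the explicit solution \eqref{6c} via the integrating factor $1/\mu$, the two-source subtraction \eqref{78} to isolate $\mu$ and hence the X-ray transform of $\sigma_a$, and finally the inversion of the attenuated X-ray transform to recover $\sigma_b$. The paper additionally mentions the alternative of recovering $\sigma_a$ via the small-amplitude limit $\delta v_-$, but your two-source argument is the one the paper ultimately emphasizes for avoiding small signals.
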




\end{document}